\newtheorem{theorem}{Theorem}[section]
\newtheorem{lemma}{Lemma}[section]
\theoremstyle{remark}
\theoremstyle{definition}
\newcommand{\spec}{\operatorname{spec}}
\begin{document}

\begin{center}
\LARGE{Branching Random Walks with One Particle Generation Center and Possible Absorption at Every Point}
\end{center}
\begin{center}
\textit{E.\,Filichkina$^{1,2}$, E.\,Yarovaya$^1$}
\end{center}

\begin{center}
$^1$ Department of Probability Theory, Lomonosov Moscow State University,\\
$^2$ National  Medical  Research  Center  for  Therapy  and  Preventive  Medicine, \\ 
Moscow, Russia\\
\end{center}
\medskip

\begin{abstract}
We consider a new model of a branching random walk on a multidimensional lattice with continuous time and one source of particle reproduction and death, as well as an infinite number of sources in which, in addition to the walk, only absorption of particles can occur. The asymptotic behavior of the integer moments of both the total number of particles and the number of particles at a lattice point is studied depending on the relationship between the model parameters. In the case of the existence of an isolated positive eigenvalue of the evolution operator of the average number of particles, a limit theorem is obtained on the exponential growth of both the total number of particles and the number of particles at a lattice point.

\medskip
\textbf{Keywords:} branching random walks, moments of particle numbers, evolution operator, Green's function
\medskip

\textbf{2020 Mathematics subject classification:} 60J27, 60J80, 05C81, 60J85
\end{abstract}
\sloppy
\section{Introduction} \label{introduction}
We consider a continuous-time branching random walk (BRW) on the multidimensional lattice $\mathbb{Z}^d$, ${d \in \mathbb{N}}$, with one source of particle reproduction and death located at the origin and an infinite number of \textit{absorbing} sources located at all other points of the lattice in which, in addition to walk, the particle can only disappear.

The behavior of a BRW with a single source of particle generation (\emph{branching}) located at the origin and no absorption at other points under the assumption of a finite variance of jumps has been studied, for example, in~\cite{YarBRW}, and with infinite variance in \cite{BRWht, BRWht1}. The random walk underlying the processes under consideration is defined using the transition intensity matrix ${A = \left(a(x,y)\right)_{x,y \in \mathbb{Z}^d}}$ and satisfies conditions of regularity, symmetry, spatial homogeneity (which allows us to consider $a(x, y)$ as a function of one argument $a(y-x)$), homogeneity in time and irreducibility. In these models the operator that specifies the evolution of the average number of particles has the form
\[
\mathcal{H} = \mathcal{A} + \beta \Delta_0,
\]
where the operator $\mathcal{A}: l^p(\mathbb{Z}^d) \rightarrow l^p(\mathbb{Z}^d)$ generated by the matrix $A$ acts on the function $\varphi\in l^p(\mathbb{Z}^d)$ by the formula
\begin{equation}\label{A:def}
	(\mathcal{A} \varphi)(x)=\sum_{y \in\mathbb{Z}^d}a(x-y)\varphi(y),\qquad x\in\mathbb{Z}^d,
\end{equation}
and the operator $\Delta_0$ is defined by the equality $\Delta_0 = \delta_0 \delta_0^T$,
where $\delta_0 = \delta_0(\cdot)$ denotes a column-vector on the lattice taking the unit value at the point $0 \in \mathbb{Z}^d$ and vanishing at other points. The parameter $\beta$ in the definition of the operator $\mathcal{H}$ is given by the equality $\beta:=\sum_{n>1} (n-1)b_n-b_{0}$, where $b_n$ is the intensity of occurrence of $n>1$ descendants of the particle, including the particle itself, $b_{0}$ is the absorption intensity of the particle. Thus, the operator $\beta \Delta_0$ determines the process of particle branching at the origin.

In a BRW with an infinite number of absorbing sources the evolution operator of the average number of particles is modified as follows
\[
\mathcal{E} = \mathcal{A} + \beta^* \Delta_0 - b_0 I,
\]
where $I$ is the identity operator and the last term specifies the process of absorption of particles at every lattice point.
Note that the parameter $\beta^*:=\sum_{n>1} (n-1)b_n$ in the considered BRW differs from the parameter $\beta=\beta^*-b_{0}$ in that for $b_{0}>0$ the parameter $\beta$ can take values from the interval $(-\infty, +\infty)$, while the parameter $\beta^*$ is non-negative: $\beta^* \geq 0$.

Let the parameter $\beta_c$ be determined by the formula $\beta_c := 1/G_0(0,0)$, where $G_\lambda(x,y)$ is the Green's function of the random walk. Many properties of the transition probabilities of a random walk $p(t, x, y)$ are expressed in terms of the Green's function, while the Green's function can be defined as the Laplace transform of the transition probability $p(t,x,y)$ by the formula:
\begin{equation}\label{Gf:def}
	G_\lambda(x,y) := \int_{0}^{\infty} e^{-\lambda t} p(t,x,y)\,dt,\quad \lambda \geq 0.
\end{equation}

As shown, for example, in \cite{YarBRW}, when the relation $\beta^* > \beta_c$ holds, the operator $\mathcal{A} + \beta^* \Delta_0$ has an isolated positive eigenvalue $\lambda_0$, which is the solution of the equation $\beta^* G_\lambda(0,0) = 1$.
The asymptotic behavior of the integer moments of the total number of particles and the number of particles at every point of the lattice in the process under consideration depends on the dimension of the lattice $d$, the relation between the parameters $\beta^*$ and $\beta_c$, and for $\beta^* > \beta_c $ also on the relation between $\lambda_0$ and~$b_0$.

In the case of $\beta > \beta_c$ a BRW with one source of particle generation and no absorbing sources is called \textit{supercritical}.
The operator $\mathcal{H}$ in this case has an isolated positive eigenvalue and there is an exponential growth in the number of particles at every point and in the total number of particles~\cite{YarBRW}.
In the process under consideration, if the relation $\beta^* > \beta_c$ holds, the operator $\mathcal{E}$ has an isolated eigenvalue ${\lambda_{\mathcal{E}} = \lambda_0 - b_0}$, where $\lambda_0 > 0$ is an isolated eigenvalue of the operator $\mathcal{A} + \beta^*\Delta_0$.
Note that the eigenvalue $\lambda_{\mathcal{E}}$ of the operator $\mathcal{E}$ is not always positive, so the behavior of the process differs significantly depending on the relation between the parameters $\lambda_0$ and $b_0$.

The structure of the paper is as follows.
In Section~\ref{description_of_the_model} we give a formal description of a BRW with particle reproduction at the origin and absorption at every point of the lattice. Section~\ref{key_eqs} presents the key equations. Section~\ref{the_first_moments} gives a complete classification of the asymptotic behavior of the first moments of particle numbers.
In Section~\ref{SupercriticalBRW} the limit Theorem~\ref{limth} is obtained, which states that despite the infinite number of absorbing sources an exponential growth of both the total number of particles and the number of particles at every point can be observed in the considered BRW. This happens when $\lambda_{\mathcal{E}}>0$, which is equivalent to $\lambda_0>b_0$.
In Section~\ref{CriticalBRW} we study the asymptotic behavior of the particle number moments for $\beta^* > \beta_c$ and $\lambda_{\mathcal{E}} = 0$ (${\lambda_0 = b_0}$), it is found that the integer moments both the total number of particles and the number of particles at every point grow in a power-law manner as $t \rightarrow \infty$, with the first moments behaving as constants at infinity.
In Section~\ref{SubcriticalBRW} we consider the remaining cases, that is, the case when $\beta^* > \beta_c$ and $\lambda_{\mathcal{E}} < 0$ (${\lambda_0 < b_0}$), and also, when the operator $\mathcal{E}$ does not have an isolated eigenvalue, that is, when $\beta^* \leq \beta_c$.
Theorems~\ref{thSubcritical1}, \ref{th_subcritical} and \ref{th_subcritical_ht} are obtained, stating that the moments of particle numbers in these cases decrease exponentially as ${t \rightarrow \infty}$.
It turned out that the results of Sections~\ref{SupercriticalBRW} and \ref{CriticalBRW} as well as Theorem~\ref{thSubcritical1} of Section~\ref{SubcriticalBRW} do not depend on the conditions imposed on the variance of random walk jumps, while the behavior of the process for $\beta^* \leq \beta_c$ turns out to be different for finite and infinite variance of jumps (Theorems~\ref{th_subcritical} and \ref{th_subcritical_ht}).

We will call the considered BRW \textit{supercritical} if $\beta^* > \beta_c$ and $\lambda_{\mathcal{E}} > 0$, \textit{critical} if $\beta^* > \beta_c$ and $\lambda_{\mathcal{E}} = 0$ and \textit{subcritical} if $\beta^* > \beta_c$ and $\lambda_{\mathcal{E}} < 0$ or $\beta^* \leq \beta_c$.

Note that there is no exponential decrease of moments in a BRW with a single source of particle generation (and the absence of other absorbing sources)~\cite{YarBRW}.
The classification of the asymptotic behavior of the BRW with possible absorption of particles at every point $\mathbb{Z}^{d}$ turns out to be closer to the classification of the behavior of the Markov branching process $\mu(t)$ with continuous time, where the average number of particles $\mathsf{E} \mu(t) = e^{at}$.
A branching process is called \textit{supercritical} if $\mathsf{E} \mu(t) > 1$ ($a > 0$), \textit{critical} if $\mathsf{E} \mu(t) = 1$ ($a = 0$) and \textit{subcritical} if $\mathsf{E} \mu(t) < 1$ ($a < 0$), that is, the average number of particles in the supercritical branching process increases exponentially, in the critical it tends to a constant and in the subcritical it decreases exponentially~\cite{sev}.


\section{Description of the Model}
\label{description_of_the_model}

Let us proceed to a formal description of the BRW with one source of particle reproduction and death located at the origin of coordinates and an infinite number of absorbing sources located at the remaining points of the lattice $\mathbb{Z}^d$, ${d \in \mathbb{N}}$.

The random walk underlying the process is specified using the transition intensity matrix ${A = \left(a(x,y)\right)_{x,y \in \mathbb{Z}^d}}$ and satisfies the conditions regularity, symmetry, spatial homogeneity (which allows us to consider  $a(x, y)$ as a function of one argument $a(y-x)$), time homogeneity and irreducibility (a particle can be at any point of the lattice).

The transition probability of a random walk, that is, the probability that at time $t \geq 0$ the particle is at point $y$, provided that at time $t=0$ it was at point $x$, is denoted by $p(t,x,y)$. Asymptotically for $h \rightarrow 0$ the transition probabilities are expressed in terms of the transition intensities as follows
\begin{align*}
	p(h,x,y) & = a(x,y)h + o(h), \quad x \neq y, \\
	p(h,x,x) & = 1 + a(x,x)h + o(h).
\end{align*}

Note that the condition for the finite variance of jumps in terms of the transition intensity matrix is written as $\sum_{z \in \mathbb{Z}^d} |z|^2 a(z) < \infty$.
In situations where the finiteness of the variance of jumps turns out to be essential we will separately consider the case when the function $a(z)$ has the following behavior at infinity
\begin{equation}\label{HT:def}
	a(z) \sim \frac{H(z/|z|)}{|z|^{d+\alpha}}, \quad |z| \rightarrow \infty,
\end{equation}
where ${|\cdot|}$ is Euclidean norm on $\mathbb{R}^d$, ${H(z/ |z|) = H(-z/|z|)}$ is a positive continuous function on ${\mathbb{S}^{d-1} = \{z \in \mathbb{R}^d :|z| = 1\}}$, ${\alpha \in (0,2)}$ and the symbol $\sim$ here and below will denote the asymptotic equivalence of functions. Under this assumption the variance of jumps becomes infinite (see \cite{l1}). Random walks with infinite variance of jumps are commonly referred to in the literature as random walks with heavy tails.
We will consider the simplest case, when ${H(z/ |z|) \equiv C >0}$, and use the results obtained in \cite{BRWht, BRWht1}, where a BRW with one particle generation center and the absence of absorbing sources was considered under condition~\eqref{HT:def}.

To describe the behavior of a random walk it is convenient to use the Green's function $G_\lambda(x,y)$, which, as mentioned in the introduction, can be defined as the Laplace transform of the transition probability $p(t,x,y)$ by the formula \eqref{Gf:def}.

As in \cite{YarBRW} we will call the random walk \textit{recurrent} if ${G_0(0,0) = \infty}$ and \textit{nonrecurrent} or \textit{transient} if ${G_0(0,0) < \infty}$. In the case of finite variance of jumps the random walk is transient for $d \geq 3$ and is recurrent for $d = 1,2$, while in the case of infinite variance of jumps (when the condition~\eqref{HT:def} is satisfied) the transience of a random walk turns out to be possible in the dimension ${d = 1}$ for ${\alpha \in (0,1)}$ and in the dimension ${d = 2}$ for ${\alpha \in (0,2)}$.

The branching process at the particle generation center is specified using the infinitesimal generating function $f(u) = \sum_{n = 0}^{\infty} b_{n} u^n$, $0 \leq u \leq 1$, where $b_{n} \geq 0$ for $n \neq 1$, $b_{1} < 0$, $\sum_{n = 0}^{\infty}b_{n} = 0$. The coefficients $b_{n}$ determine the main linear part of the probability $p_*(h,n)$ of having $n$ particles at time $h$ provided that there was one particle at the initial time $t=0$:
\begin{align*}
	p_*(h,n) & = b_n h + o(h) \text{ for } n \neq 1, \\
	p_*(h,1) & = 1+b_1 h + o(h).
\end{align*}
The coefficients $b_{n}$ for $n \geq 1$ can be interpreted as the intensities of appearance of $n$ descendants of the particle, including the particle itself, while $b_{0}$ is interpreted as the intensity of death, or absorption, of the particle.
The generating function at other points of the lattice has a simpler form: $\overline{f}(u) = b_0 + \overline{b}_1u = b_0(1 - u)$. Further, we assume that the intensity of death is the same at all lattice points.

The evolution of particles in the system occurs as follows: a particle located at some time ${t>0}$ at the point ${x \in \mathbb{Z}^d}$ in a short time ${dt \rightarrow 0}$ can either jump to the point ${y \neq x,y \in \mathbb{Z}^d}$, with probability $a(x,y)dt + o(dt)$, or die with probability $b_0 dt + o(dt)$. If the point $x$ is the center of particle generation $(x = 0)$, then the particle can also produce $n>1$ descendants, including itself, with probability $b_n dt + o(dt)$. Otherwise, with probability $1 + a(x,x)dt + \delta_0(x)b_1 dt + (1-\delta_0(x))(-b_0 dt) + o(dt)$, the particle remains at the point $x$ during the entire time interval $[t, t+dt]$. We assume that each new particle evolves according to the same law, independently of other particles and of the entire prehistory.

The main objects of study in BRW are the number of particles at the time ${t \geq 0}$ at the point $y \in \mathbb{Z}^d$ (the local number of particles), denoted by $\mu(t, y)$, the total number of particles (particle population), denoted by ${\mu(t) = \sum_{y \in \mathbb{Z}^d} \mu(t, y)}$, and their integer moments, which are denoted as ${m_n(t,x,y) := \mathsf{E}_x \mu^n(t,y)}$ and ${m_n(t, x) := \mathsf{E}_x \mu^n(t)}$, ${n \in \mathbb{N}}$, where $\mathsf{E}_x$ is the mean on condition ${\mu(0, y) = \delta(x - y)}$, $\delta(\cdot)$ is the Kronecker delta on $\mathbb{Z}^d$. We will assume that at the initial moment of time $t=0$ the system consists of one particle located at the point ${x \in \mathbb{Z}^{d}}$, so the expectations of the local and total number of particles satisfy the initial conditions ${m_1(0,x,y) = \delta_y(x)}$ and ${m_1(0, x) \equiv 1}$ respectively.

\section{Key Equations}
\label{key_eqs}

Let us present the key equations that will be required to study the behavior of the considered BRW. The proofs of the theorems presented in this Section are based on the methods developed in \cite{YarBRW} and follow the same scheme, so the corresponding theorems will be presented below without proof.

We introduce the Laplace generating functions of the random variables $\mu(t,y)$ and $\mu(t)$ for $z \geq 0$:
\[
F(z;t,x,y) := \mathsf{E}_x e^{-z \mu(t,y)}, \quad F(z;t,x) := \mathsf{E}_x e^{-z \mu(t)}.
\]

Taking into account the evolution of particles in the system and using the Markov property of the process, the following statement can be proved for the generating functions.

\begin{theorem}
	The functions $F(z;t,x)$ and $F(z;t,x,y)$ are continuously differentiable with respect to $t$ uniformly with respect to ${x,y \in \mathbb{Z}^d}$ for all ${0 \leq z \leq \infty}$. They are the solutions to the following Cauchy problems:
	\begin{align*}
		\partial_t F(z;t,x)   & = (\mathcal{A}F(z;t,\cdot))(x) +
		\delta_0(x)f(F(z;t,x)) +                                        \\
		& \quad+  (1-\delta_0(x))b_0(1-F(z;t,x)), \\
		\partial_t F(z;t,x,y) & = (\mathcal{A}F(z;t,\cdot,y))(x) +
		\delta_0(x)f(F(z;t,x,y)) +                                      \\
		& \quad+ (1-\delta_0(x))b_0(1-F(z;t,x,y))
	\end{align*}
	with the initial conditions $F(z;0,x) = e^{-z}$ and $F(z;0,x,y) = e^{-z\delta_y(x)}$ respectively. Here $\mathcal{A}: l^p(\mathbb{Z}^d) \rightarrow l^p(\mathbb{Z}^d)$, $1 \leq p \leq \infty$, is a walk operator that acts on the function $\varphi\in l^p(\mathbb{Z}^d)$ by the formula~\eqref{A:def}.
\end{theorem}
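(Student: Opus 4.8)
The plan is to derive both Cauchy problems by a first-step (backward) analysis resting on the branching property together with the Markov and time-homogeneity properties of the process; I would treat the two generating functions in parallel, since the reproduction mechanism is identical and only the observable ($\mu(t)$ versus $\mu(t,y)$) differs. Fix $z\ge 0$ and start the system from a single particle at $x$. Over the short interval $[0,h]$ the initial particle either stays at $x$, jumps to some $y\neq x$ at rate $a(x,y)$, is absorbed at rate $b_0$, or --- only if $x=0$ --- splits into $n\neq 1$ offspring at rate $b_n$. Conditioning on this first transition and using that every particle present at time $h$ independently generates a family distributed as the whole process run for the remaining time $t$, the Laplace generating function factorizes over the particles alive at time $h$.

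Carrying out the bookkeeping, for $x\neq 0$ I would write
\[
F(z;t+h,x)=\bigl(1+a(x,x)h-b_0h\bigr)F(z;t,x)+\sum_{y\neq x}a(x,y)h\,F(z;t,y)+b_0h+o(h),
\]
where the absorption outcome contributes $e^{-z\cdot 0}=1$, weighted by $b_0h$. For $x=0$ the splitting into $n$ offspring contributes $F(z;t,0)^n$, and after inserting $a(0,0)=-\sum_{y\neq 0}a(0,y)$ and $\sum_n b_n=0$ the branching terms assemble into $f(F(z;t,0))=\sum_{n\ge 0}b_n F(z;t,0)^n$. Recognizing $\sum_y a(x,y)F(z;t,y)=(\mathcal{A}F(z;t,\cdot))(x)$ in both cases, dividing by $h$ and letting $h\to 0$ yields exactly the stated right-hand side $(\mathcal{A}F(z;t,\cdot))(x)+\delta_0(x)f(F(z;t,x))+(1-\delta_0(x))b_0(1-F(z;t,x))$. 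The computation for $F(z;t,x,y)$ is verbatim the same, the absorption outcome again contributing the empty product $1$. The initial conditions are read off directly from $\mu(0)\equiv 1$ and $\mu(0,y)=\delta_y(x)$, giving $F(z;0,x)=e^{-z}$ and $F(z;0,x,y)=e^{-z\delta_y(x)}$.

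The hard part is not this formal bookkeeping but the rigorous passage to the limit, uniformly over the infinitely many starting points $x,y$. I would isolate three ingredients: (i) $0\le F\le 1$ for $z\ge 0$, so all powers $F^n$ and the series $f(F)$ are well defined, and $f\in C^1[0,1]$ under the finite-mean assumption $\beta^*=\sum_{n>1}(n-1)b_n<\infty$ (which makes $f'(1)=\sum_n nb_n$ finite); (ii) spatial homogeneity gives $\sum_y|a(x,y)|=-2a(0,0)$ and $\sum_{n\neq 1}b_n=-b_1$, so the total rate of leaving any single-particle state is bounded by the $x$-independent constant $-a(0,0)-b_1$, whence the probability of two or more transitions in $[0,h]$ is $O(h^2)$ uniformly in $x$ --- this is precisely what turns the remainder into a uniform $o(h)$; and (iii) boundedness of $\mathcal{A}$ on $l^\infty(\mathbb{Z}^d)$ (with norm $-2a(0,0)$), which guarantees that $\sum_y a(x,y)F(z;t,y)$ converges uniformly. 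Together these give uniform differentiability from the right.

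To upgrade right-differentiability to the claimed continuous differentiability, I would first establish that $t\mapsto F(z;t,\cdot)$ is continuous uniformly in $x$ (following from the probabilistic representation and the uniform rate bound, since in bounded time the system performs finitely many transitions with overwhelming probability); the right-hand side is then a fixed continuous functional of $F(z;t,\cdot)$, hence continuous in $t$, and a continuous right derivative forces genuine $C^1$ regularity. This mirrors the scheme of \cite{YarBRW}; the only genuinely new point relative to that reference is the treatment of the absorption outcome at the points $x\neq 0$, which produces the extra term $(1-\delta_0(x))b_0(1-F)$ and is responsible for the qualitative difference of the present model.
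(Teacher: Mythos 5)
Your derivation is correct and follows essentially the same route as the paper, which proves this theorem by repeating the backward (first-transition) analysis of Lemma 1.2.1 in \cite{YarBRW}: conditioning on the first event in $[0,h]$, using the Markov and branching properties, obtaining the uniform $o(h)$ from the $x$-independent rate bound, and then upgrading a continuous right derivative to $C^1$. Your handling of the one genuinely new term --- absorption at $x\neq 0$ contributing $(1-\delta_0(x))b_0(1-F)$ via the outcome $e^{-z\cdot 0}=1$ at rate $b_0$ --- is exactly the ``technical detail'' in which the paper's proof differs from the cited lemma.
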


Note that the proof of this theorem repeats the arguments from the proof of Lemma 1.2.1 in~\cite{YarBRW} and differs only in technical details.

The following theorem turns out to be true for the moments of particle numbers.

\begin{theorem} \label{difeq_m}
	The moments $m_n (t,\cdot,y) \in l^2(\mathbb{Z}^d)$ and $m_n (t,\cdot) \in l^{\infty}(\mathbb{Z}^d)$ satisfy the following differential equations in the corresponding Banach spaces for all natural $n \geq 1$:
	\begin{align}
		\label{eq1}
		\frac{d m_1}{dt} & = \mathcal{E}m_1 = \mathcal{A}m_1 + \beta^*
		\Delta_0 m_1- b_0 m_1,                                                                         \\
		\label{eq2}
		\frac{d m_n}{dt} & = \mathcal{E}m_n+ \delta_0(\cdot)g_n(m_1, \ldots, m_{n-1}), \quad n \geq 2,
	\end{align}
	with the initial conditions $m_n(0,\cdot,y) = \delta_y(\cdot)$ and $m_n(t, \cdot) \equiv 1$ respectively.
	Here
	$\beta^* := \sum_{n > 1} (n-1) b_n$, the operator $\mathcal{A}: l^p(\mathbb{Z}^d) \rightarrow l^p(\mathbb{Z}^d)$ is given by the formula~\eqref{A:def}, the operator $\Delta_0$ is defined by the equality ${\Delta_0 = \delta_0 \delta_0^T}$, where ${\delta_0 = \delta_0(\cdot)}$ denotes a column-vector on the lattice taking the unit value at the point $0 \in \mathbb{Z}^d$ and vanishing at other points and the function $g_n(m_1,\ldots,m_{n-1})$ is given by the formula
	\[
	g_n(m_1,\ldots,m_{n-1}) := \sum_{r = 2}^{n} \frac{\beta^{(r)}}{r!} \sum_{\substack{i_{1}, \ldots, i_{r}>0 \\ i_{1}+\cdots+i_{r}=n}} \frac{n!}{i_1! \cdots i_r!} m_{i_1} \cdots m_{i_r},
	\]
	where $\beta^{(r)} := f^{(r)}(1)$.
\end{theorem}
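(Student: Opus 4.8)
The plan is to obtain both moment equations by differentiating the Cauchy problems for the Laplace generating functions (established in the previous theorem) $n$ times in the variable $z$ and evaluating at $z=0$. Since $F(z;t,x,y)=\mathsf{E}_x e^{-z\mu(t,y)}$ and $F(z;t,x)=\mathsf{E}_x e^{-z\mu(t)}$, one has
\[
m_n(t,x,y)=(-1)^n\,\partial_z^n F(z;t,x,y)\big|_{z=0},\qquad m_n(t,x)=(-1)^n\,\partial_z^n F(z;t,x)\big|_{z=0},
\]
so applying the operator $(-1)^n\partial_z^n|_{z=0}$ to each evolution equation for $F$ should produce the asserted evolution equation for $m_n$. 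The local and total cases are handled by identical computations and differ only through the initial data, so I would treat them together and read off the initial conditions from $F(z;0,x,y)=e^{-z\delta_y(x)}$ and $F(z;0,x)=e^{-z}$ at the end.

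The linear terms are immediate: since $\mathcal{A}$ is bounded and commutes with $\partial_z$, the term $(\mathcal{A}F)(x)$ contributes $\mathcal{A}m_n$, while $(1-\delta_0(x))b_0(1-F)$ contributes $-(1-\delta_0(x))b_0 m_n$ for every $n\ge1$ (the constant $1$ is annihilated by $\partial_z$). The heart of the argument is the nonlinear branching term $\delta_0(x)f(F)$. Here I would expand $f$ about $u=1$; using $f(1)=\sum_n b_n=0$ and $f^{(r)}(1)=\beta^{(r)}$ gives $f(F)=\sum_{r\ge1}\frac{\beta^{(r)}}{r!}(F-1)^r$. Writing $w:=F-1$, which vanishes at $z=0$ while $\partial_z^k w|_{z=0}=(-1)^k m_k$, the multinomial (Fa\`{a} di Bruno) formula for $\partial_z^n(w^r)$ shows that every summand containing a factor $\partial_z^0 w|_{z=0}=w|_{z=0}=0$ drops out; hence only multi-indices with all parts strictly positive survive, which forces $r\le n$ and yields
\[
(-1)^n\partial_z^n f(F)\big|_{z=0}=\sum_{r=1}^{n}\frac{\beta^{(r)}}{r!}\sum_{\substack{i_1,\dots,i_r>0\\ i_1+\cdots+i_r=n}}\frac{n!}{i_1!\cdots i_r!}\,m_{i_1}\cdots m_{i_r}.
\]
Splitting off the $r=1$ term, equal to $\beta^{(1)}m_n=\beta\,m_n$, identifies the remaining sum over $r\ge2$ (which involves only $m_1,\dots,m_{n-1}$, since each part is at most $n-1$) precisely with $g_n(m_1,\dots,m_{n-1})$.

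Collecting the contributions gives $\partial_t m_n=\mathcal{A}m_n+\delta_0(x)\beta\,m_n-(1-\delta_0(x))b_0 m_n+\delta_0(x)g_n$, and it remains to rewrite the $\delta_0$-localized and $b_0$ terms. Using $\beta=\beta^*-b_0$ one checks pointwise that $\delta_0(x)\beta\,m_n-(1-\delta_0(x))b_0 m_n=\beta^*\Delta_0 m_n-b_0 m_n$, so the linear part is exactly $\mathcal{E}m_n$; for $n=1$ the remainder $g_n$ is absent, giving \eqref{eq1}, and for $n\ge2$ one obtains \eqref{eq2}. The main obstacle is analytic rather than algebraic: one must justify that $F$ is $n$ times continuously differentiable in $z$ up to $z=0$ and that $\partial_z^n$ may be interchanged with $\mathsf{E}_x$, which is equivalent to the finiteness of the integer moments and to the membership $m_n(t,\cdot,y)\in l^2(\mathbb{Z}^d)$, $m_n(t,\cdot)\in l^\infty(\mathbb{Z}^d)$ asserted in the statement. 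I would establish this by induction on $n$: assuming finiteness and the required regularity of $m_1,\dots,m_{n-1}$, the function $g_n$ is a finite combination of known moments, so \eqref{eq2} becomes a linear inhomogeneous Cauchy problem in the relevant Banach space whose unique bounded solution, obtained via the variation-of-constants formula with the semigroup generated by $\mathcal{E}$, is finite by a Gr\"onwall-type estimate. This closes the induction and legitimizes the formal differentiation, following the scheme of the corresponding result in \cite{YarBRW}.
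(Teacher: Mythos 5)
Your proposal follows essentially the same route as the paper, which proves the theorem by repeating the argument of Theorem 1.3.1 in \cite{YarBRW}: differentiating the Cauchy problems for the generating functions $n$ times in $z$, invoking Fa\`a di Bruno's (multinomial) formula for the branching term, and using $m_n = (-1)^n\lim_{z\to 0+}\partial_z^n F$, with the identification $\beta^{(1)}=f'(1)=\beta^*-b_0$ correctly recombining the $\delta_0$-localized and absorption terms into $\mathcal{E}=\mathcal{A}+\beta^*\Delta_0-b_0 I$. Your inductive justification of the interchange of $\partial_z^n$ with $\mathsf{E}_x$ via variation of constants and a Gr\"onwall estimate matches the scheme of \cite{YarBRW} as well, so the proof is correct and not a genuinely different approach.
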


The proof of this theorem repeats the argument of the proof of Theorem 1.3.1 from~\cite{YarBRW}. It also uses equations for generating functions, the Fa\`a di Bruno's formula and the following property:
\begin{align*}
	m_n(t,x)   & = (-1)^n \lim_{z \rightarrow 0+} \partial_z^n F(z;t,x),   \\
	m_n(t,x,y) & = (-1)^n \lim_{z \rightarrow 0+} \partial_z^n F(z;t,x,y).
\end{align*}

Consider separately the case ${\beta^* = 0}$, this condition is equivalent to the fact that all $b_n$ for ${n>1}$ are equal to zero.
That is, in this case the particle does not produce new descendants and only the death and movement of the particle along the lattice is possible.
The operator describing the evolution of the average number of particles in this particular case has the form ${\mathcal{E} = \mathcal{A} - b_0I}$ and the equations for the moments for all ${n \in \mathbb{N}}$ take the form
\[
\partial_t m_n = \mathcal{A}m_n - b_0 m_n.
\]
Making the change of variables $m_n = q_n e^{-b_0t}$ in the last equation, we get that the functions $q_n$ satisfy the equation
\[
\partial_t q_n = \mathcal{A}q_n.
\]
The equation for the transition probabilities of a random walk $p(t,x,y)$ has the same form,
whence we get that
\[
m_n(t,x,y) = e^{-b_0t}p(t,x,y), \qquad m_n(t,x) =  e^{-b_0t},
\]
for all $d, n \in \mathbb{N}$.

Further, we will assume that the parameter $\beta^*$ is strictly positive (a particle in the generation source can produce at least one new particle).

Integral equations for the moments will play an important role in the further analysis, the derivation of which is carried out according to the same scheme as in \cite[Theorem 1.4.1]{YarBRW}.

\begin{theorem} \label{th_int}
	The moment $m_1(t,x,y)$ satisfies both integral equations
	\begin{align}
		\label{eq3} m_1(t,x,y) & = p(t,x,y) + \int_{0}^{t} (\beta^* p(t-s, x, 0) - b_0
		e^{\mathcal{A}(t-s)}) m_1(s,0,y)\,ds,                                                                                \\
		\label{eq4} m_1(t,x,y) & = p(t,x,y) + \int_{0}^{t} (\beta^* p(t-s, 0, y) - b_0 e^{\mathcal{A}(t-s)}) m_1(s,x,0)\,ds.
	\end{align}
	
	The moment $m_1(t,x)$ satisfies both integral equations
	\begin{equation}
		\label{eq5}
		\begin{aligned} m_1(t,x) & = 1 + \int_{0}^{t} (\beta^* p(t-s, x, 0) - b_0
			e^{\mathcal{A}(t-s)}) m_1(s,0)\,ds,                                                \\
			m_1(t,x) & = 1 + \int_{0}^{t} (\beta^* - b_0 e^{\mathcal{A}(t-s)}) m_1(s,x,0)\,ds.
		\end{aligned}
	\end{equation}
	
	For $k > 1$ the moments $m_k(t,x,y)$ and $m_k(t,x)$ satisfy the equations
	\begin{equation}
		\label{eq6}
		\begin{aligned} m_k(t,x,y) & = m_1(t,x,y)+                    \\&\quad + \int_{0}^{t} m_1(t-s, x,
			0)g_k(m_1(s,0,y), \dots, m_{k-1}(s,0,y))\,ds, \\
			m_k(t,x)   & = m_1(t,x)+                      \\&\quad + \int_{0}^{t} m_1(t-s, x, 0)g_k(m_1(s,0), \dots, m_{k-1}(s,0))\,ds.
		\end{aligned}
	\end{equation}
\end{theorem}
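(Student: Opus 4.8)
The plan is to obtain every identity from the differential equations of Theorem~\ref{difeq_m} by the variation of constants (Duhamel) formula, exploiting that $\mathcal{A}$ is a bounded operator on $l^p(\mathbb{Z}^d)$ generating the strongly continuous semigroup $e^{\mathcal{A}t}$ whose kernel is exactly the transition probability, $(e^{\mathcal{A}t}\delta_y)(x) = p(t,x,y)$, and that $\beta^*\Delta_0 - b_0 I$ is bounded, so that the resulting mild solution coincides with the classical one. I would use repeatedly the two structural facts $\Delta_0\varphi = \varphi(0)\delta_0$ (from $\Delta_0 = \delta_0\delta_0^T$) and $\sum_{y}p(t,x,y) = 1$, equivalently $e^{\mathcal{A}t}\mathbf{1} = \mathbf{1}$, which holds because the rows of $A$ sum to zero.

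For \eqref{eq3} I would rewrite \eqref{eq1} as $\dot m_1 = \mathcal{A}m_1 + (\beta^*\Delta_0 - b_0 I)m_1$ and apply Duhamel with $\mathcal{A}$ as the free generator and initial datum $m_1(0,\cdot,y) = \delta_y$. The free term produces $p(t,x,y)$; in the convolution, $\Delta_0 m_1(s,\cdot,y) = m_1(s,0,y)\delta_0$ together with $e^{\mathcal{A}(t-s)}\delta_0 = p(t-s,\cdot,0)$ yields the $\beta^*$-term $\beta^* p(t-s,x,0)m_1(s,0,y)$, while the $-b_0 I$ term gives $-b_0\bigl(e^{\mathcal{A}(t-s)}m_1(s,\cdot,y)\bigr)(x)$, which is what the compact notation in \eqref{eq3} abbreviates. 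The dual form \eqref{eq4} I would get either by running the same computation on the forward (in the $y$-variable) equation, or, more economically, by noting that self-adjointness of $\mathcal{E}$ on $l^2$ and symmetry $p(t,x,y) = p(t,y,x)$ force $m_1(t,x,y) = m_1(t,y,x)$, so that interchanging $x\leftrightarrow y$ in \eqref{eq3} gives \eqref{eq4}.

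The two identities in \eqref{eq5} follow the same template for the total population, now with the $l^\infty$ initial datum $m_1(0,\cdot)\equiv 1$: since $e^{\mathcal{A}t}\mathbf{1} = \mathbf{1}$ the free term is the constant $1$, and Duhamel produces the first line. The second line I would derive by summing \eqref{eq4} over $y$ and using $\sum_y p(t-s,0,y) = 1$ and the stochasticity of $e^{\mathcal{A}(t-s)}$; this collapses the $\beta^*$-kernel to the bare constant $\beta^*$ while turning the local moment into the total moment. Finally, for the higher moments \eqref{eq6} I would switch the free generator from $\mathcal{A}$ to the full operator $\mathcal{E}$: the first-moment analysis already identifies $e^{\mathcal{E}t}$ as the semigroup with kernel $m_1$, i.e. $m_1(t,\cdot,y) = e^{\mathcal{E}t}\delta_y$. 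Applying Duhamel to \eqref{eq2} with $\mathcal{E}$ as generator, the free term is $m_1(t,x,y)$, and the inhomogeneity $\delta_0(\cdot)g_k(m_1(s,\cdot,y),\dots)$ is localized at the origin by the factor $\delta_0$, so $e^{\mathcal{E}(t-s)}$ transports it from $0$ to $x$ and reproduces $m_1(t-s,x,0)g_k(m_1(s,0,y),\dots)$; the total-population version is identical with $\delta_y$ replaced by $\mathbf{1}$.

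The bookkeeping is routine once the two semigroups are in place, so the main thing to get right is the functional-analytic setup: verifying that $e^{\mathcal{A}t}$ and $e^{\mathcal{E}t}$ act on the correct spaces ($l^2$ for the local moments, $l^\infty$ for the total moments) and that the mild solutions furnished by Duhamel are genuine solutions of the Cauchy problems of Theorem~\ref{difeq_m}, uniqueness being immediate from boundedness of $\mathcal{A}$ and $\Delta_0$. The only genuinely delicate point is the step for the total number, where one must justify interchanging the infinite sum over $y$ with the time integral and with $e^{\mathcal{A}(t-s)}$; this is where the conservativity $\sum_y p = 1$ and the $l^\infty$ bounds on $m_1$ enter, and it is the place where I would expect to spend the most care.
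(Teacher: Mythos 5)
Your proposal is correct and takes essentially the same route as the paper, which does not spell out a proof but refers to the variation-of-constants scheme of \cite[Theorem~1.4.1]{YarBRW}: Duhamel applied to the differential equations of Theorem~\ref{difeq_m}, with $e^{\mathcal{A}t}$ (kernel $p(t,x,y)$) as the free semigroup for the first moments and $e^{\mathcal{E}t}$ (kernel $m_1$) for the higher moments. Your reading of the $-b_0e^{\mathcal{A}(t-s)}$ terms as shorthand for $-b_0\bigl(e^{\mathcal{A}(t-s)}m_1(s,\cdot,y)\bigr)(x)$ is the consistent interpretation of the paper's compact notation, and deriving \eqref{eq4} from the symmetry $m_1(t,x,y)=m_1(t,y,x)$ (self-adjointness of $\mathcal{E}$ on $l^2$) instead of the forward equation is an inessential variant of the same scheme.
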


Note that the derivation of the differential and integral equations presented in this Section does not depend on the conditions imposed on the variance of random walk jumps, as noted, for example, in~\cite{limth,BRWht1}.

\section{Classification of the Asymptotic Behavior of the First Moments}
\label{the_first_moments}

Let us first study the asymptotic behavior of the first moments. To do this we pass from the functions $m_1(t,\cdot,y)$ and $m_1(t,\cdot)$ to the functions $q(t,\cdot,y)$ and $q(t,\cdot)$, making a change of variables ${m_1 = qe^{-b_0 t}}$. We obtain an equation for the functions $q(t,\cdot,y)$ and $q(t,\cdot)$ of the form
\[
\frac{d q}{dt} = \mathcal{A}q + \beta^* \Delta_0 q
\]
with the initial conditions $q(0,\cdot ,y) = \delta_y(\cdot)$ and $q(0, \cdot) \equiv 1$ respectively.

Note that the resulting equation has exactly the same form as the equation for the first moments in the BRW without absorbing sources, considered in~\cite{YarBRW} (or in~\cite{BRWht} for the case of heavy tails), which greatly simplifies the study.
The classification of the asymptotic behavior of the first moments of the local number of particles and the total number of particles for arbitrary $d-$dimensional lattices in the considered BRW can be obtained using the classification of the asymptotic behavior for the functions $q(t,x,y)$ and $q(t,x)$, obtained in~\cite{YarBRW},\cite{BRWht}, and the relation $m_1 = qe^{-b_0 t}$.

As in~\cite{YarBRW} we denote ${\beta_c := 1/G_0(0,0)}$, where $G_\lambda(x,y)$
is the Green's function of the random walk. When ${\beta^* > \beta_c}$ the operator
$\mathcal{A} + \beta^* \Delta_0$ has a single isolated positive eigenvalue $\lambda_0$, which is a solution of the equation $\beta^* G_\lambda(0,0) = 1$. However, the eigenvalue $\lambda_{\mathcal{E}}$ of the operator $\mathcal{E}$ that arises in this case is equal to $\lambda_0 - b_0$ and is not always positive, which complicates the problem. In contrast to the BRW considered in~\cite{YarBRW}, the asymptotic behavior of the process considered in this paper differs significantly depending on the relation between the parameters $\lambda_0$ and $b_0$, namely, for ${\lambda_0 > b_0}$, $\lambda_0 = b_0$ and $\lambda_0 < b_0$.

So, in the case of a finite variance of jumps we obtain the following classification of the asymptotic behavior of the first moments.

\begin{theorem} \label{th_fm}
	Let the variance of jumps of the random walk be finite, then for ${t \rightarrow \infty}$ the asymptotic behavior of the first moments can be represented as
	\[
	m_1(t,x,y) \sim C(x,y)u^*(t), \quad m_1(t,x) \sim C(x)v^*(t),
	\]
	where $C(x,y), C(x)$ are some positive functions, whose explicit form was obtained in~\cite{YarBRW}, and the functions $u^*(t)$ and $v^*(t)$ have the following form
	\begin{itemize}
		\item[$a)$] for $\beta^* > \beta_c$: $u^*(t) = e^{\lambda_{\mathcal{E}} t}$, $v^*(t) = e^{\lambda_{\mathcal{E}} t}$;
		
		\item[$b)$] for $\beta^* = \beta_c$:
		
		\begin{itemize}
			\item[] $d=3$: $u^*(t) = t^{-1/2}e^{-b_0 t}$, $v^*(t) = t^{1/2}e^{-b_0 t}$;
			\item[] $d=4$: $u^*(t) = (\ln t)^{-1}e^{-b_0 t}$, $v^*(t) = t(\ln t)^{-1}e^{-b_0 t}$;
			\item[] $d \geq 5$: $u^*(t) = e^{-b_0 t}$, $v^*(t) =t e^{-b_0 t}$;
		\end{itemize}
		
		\item[$c)$] for $\beta^* < \beta_c$, $d \geq 3$: $u^*(t) = t^{-d/2}e^{-b_0 t}$, $v^*(t) = e^{-b_0 t}$.
	\end{itemize}

\end{theorem}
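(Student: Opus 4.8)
The plan is to build entirely on the reduction already carried out above: applying the substitution $m_1 = q e^{-b_0 t}$ to the first-moment equation \eqref{eq1} turns it into
\[
\frac{dq}{dt} = \mathcal{A}q + \beta^* \Delta_0 q,
\]
with the same initial data $q(0,\cdot,y) = \delta_y(\cdot)$ and $q(0,\cdot) \equiv 1$. This is \emph{verbatim} the first-moment equation for a BRW with a single branching source at the origin and no absorbing sources, whose evolution operator is $\mathcal{A} + \beta^* \Delta_0$. Consequently the complete asymptotic classification of $q(t,x,y)$ and $q(t,x)$ is available from \cite{YarBRW} in the finite-variance case (the same reduction is insensitive to the variance condition, which is why \cite{BRWht} applies under \eqref{HT:def}), and the whole statement will follow by multiplying those asymptotics by the common deterministic factor $e^{-b_0 t}$.

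Next I would recall the three regimes for $q$, all governed by the spectral picture of $\mathcal{A} + \beta^* \Delta_0$. When $\beta^* > \beta_c$ this operator has a single isolated simple positive eigenvalue $\lambda_0$, the unique root of $\beta^* G_\lambda(0,0) = 1$; projecting onto the corresponding one-dimensional eigenspace yields $q(t,x,y) \sim C(x,y) e^{\lambda_0 t}$ and $q(t,x) \sim C(x) e^{\lambda_0 t}$, with the constants computed explicitly in \cite{YarBRW}. When $\beta^* \le \beta_c$ there is no positive eigenvalue and the large-time behavior is dictated by the edge of the continuous spectrum of $\mathcal{A}$ at $\lambda = 0$; the dimension-dependent polynomial and logarithmic prefactors ($d=3,4,\ge 5$ at criticality, $d \ge 3$ below it) arise from a Tauberian analysis of the Laplace transform of $q$, equivalently from the behavior of $G_\lambda(0,0)$ and its $\lambda$-derivatives as $\lambda \to 0+$, exactly as in \cite{YarBRW}.

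Finally, I would substitute $m_1 = q e^{-b_0 t}$ case by case. In the supercritical $q$-regime the growth rate $e^{\lambda_0 t}$ is shifted to $e^{(\lambda_0 - b_0) t} = e^{\lambda_{\mathcal{E}} t}$, giving $u^*(t) = v^*(t) = e^{\lambda_{\mathcal{E}} t}$ of part $a)$; in the remaining regimes each listed power or logarithmic prefactor is simply multiplied by $e^{-b_0 t}$, reproducing parts $b)$ and $c)$, while the spatial constants $C(x,y)$, $C(x)$ carry over unchanged.

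The genuine analytic content — the contour and Tauberian estimates producing the critical and subcritical corrections — lives in the cited works and needs no rework here, since the $q$-equation coincides with the one treated there. The only point I would state carefully is that the reduction $m_1 = q e^{-b_0 t}$ is valid uniformly in $t$: this follows from the linearity of \eqref{eq1} and uniqueness of the solution in $l^2(\mathbb{Z}^d)$ (respectively $l^\infty(\mathbb{Z}^d)$), so that asymptotic equivalence is preserved under multiplication by the factor $e^{-b_0 t}$. This bookkeeping, rather than any new estimate, is the main thing to get right, which makes the result essentially a corollary of the no-absorption theory.
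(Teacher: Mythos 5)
Your proposal is correct and follows essentially the same route as the paper, which likewise reduces the problem via the substitution $m_1 = q\,e^{-b_0 t}$ to the no-absorption first-moment equation and then imports the classification of $q(t,x,y)$ and $q(t,x)$ from \cite{YarBRW}, multiplying by $e^{-b_0 t}$ (with $e^{\lambda_0 t}e^{-b_0 t} = e^{\lambda_{\mathcal{E}}t}$ in case $a)$). The paper in fact gives no more detail than this reduction, so your additional remarks on uniqueness and the provenance of the Tauberian estimates only make the argument more explicit.
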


Note that for a recurrent random walk ${\beta_c = 0}$, and since the parameter $\beta^*$ is assumed to be positive, then assuming a finite variance of jumps for ${d \leq 2}$ the relation ${\beta^* > \beta_c}$ always holds, due to which in the above classification, in contrast to~\cite{YarBRW}, there are no cases of ${d = 1,2}$ for ${\beta^* \leq \beta_c}$.

We also note that for ${\beta^* \leq \beta_c}$ for all $d$ an exponential decrease in the first moments of both the local number and the total number of particles is observed.

Let us separately consider the result obtained for ${\beta^* > \beta_c}$. In this case, since $\lambda_{\mathcal{E}} = \lambda_0 - b_0$, the asymptotic behavior of the first moments depends on the relation between $\lambda_0$ and $b_0$: three different cases are possible. For ${\lambda_0 > b_0}$ an exponential growth of the first moments is observed, for ${\lambda_0 = b_0}$ the first moments tend to a constant and for ${\lambda_0 < b_0}$ an exponential decrease is observed, these cases correspond to supercritical, critical and subcritical cases in the theory of branching processes~\cite{sev}.

The classification of the asymptotic behavior of the first moments in the case of heavy tails uses the classification of the behavior of the functions $q(t,x,y)$ and $q(t,x)$ obtained in~\cite{BRWht}.

\begin{theorem} \label{th_fm_ht}
	Under the condition~\eqref{HT:def} the asymptotic behavior of the first moments for ${\alpha \in (0,2)}$ and ${t \rightarrow \infty}$ can be represented as
	\[
	m_1(t,x,y) \sim C(x,y)u^*(t), \quad m_1(t,x) \sim C(x)v^*(t),
	\]
	where ${C(x,y), C(x) > 0}$ and the functions $u^*(t)$ and $v^*(t)$ have the following form
	
	\begin{itemize}
		
		\item[$a)$] for $\beta^* > \beta_c$: $u^*(t) = e^{\lambda_{\mathcal{E}} t}$, $v^*(t) = e^{\lambda_{\mathcal{E}} t}$;
		
		\item[$b)$] for $\beta^* = \beta_c$:
		\begin{itemize}
			\item[]  $u^*(t) = t^{d/\alpha - 2}e^{-b_0 t}$, $v^*(t) = t^{d/\alpha - 1}e^{-b_0 t}$, if $d/\alpha \in (1,2)$;
			\item[]  $u^*(t) = (\ln t)^{-1}e^{-b_0 t}$, $v^*(t) = t (\ln t)^{-1} e^{-b_0 t}$, if $d/\alpha = 2$;
			\item[]  $u^*(t) = e^{-b_0 t}$, $v^*(t) = t e^{-b_0 t}$, if $d/\alpha \in (2, +\infty)$;
		\end{itemize}
		
		\item[$c)$] for $\beta^* < \beta_c$: $u^*(t) = t^{-d/\alpha}e^{-b_0 t}$, $v^*(t) = e^{-b_0 t}$, $d/\alpha \in (1, +\infty)$.
		
	\end{itemize}
\end{theorem}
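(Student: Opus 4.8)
The plan is to reduce the heavy-tail classification entirely to the corresponding result for a BRW without absorbing sources via the substitution $m_1 = q e^{-b_0 t}$ introduced at the beginning of this Section. Under this substitution the functions $q(t,\cdot,y)$ and $q(t,\cdot)$ satisfy $dq/dt = \mathcal{A}q + \beta^* \Delta_0 q$ with initial data $q(0,\cdot,y) = \delta_y(\cdot)$ and $q(0,\cdot) \equiv 1$. This is precisely the first-moment equation of a BRW with a single branching source of intensity $\beta^*$ and no absorption, whose evolution operator is $\mathcal{A} + \beta^* \Delta_0$. Hence the spectral and asymptotic analysis carried out in~\cite{BRWht} under the heavy-tail hypothesis~\eqref{HT:def} applies verbatim, with the branching parameter there identified with $\beta^* > 0$ and the critical value $\beta_c = 1/G_0(0,0)$ unchanged. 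The asymptotics of $m_1$ are then recovered by multiplying the asymptotics of $q$ by $e^{-b_0 t}$, and the constants $C(x,y), C(x) > 0$ are simply those produced in~\cite{BRWht}.

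First I would recall the supercritical regime for $q$ established in~\cite{BRWht}. For $\beta^* > \beta_c$ the operator $\mathcal{A} + \beta^* \Delta_0$ possesses an isolated positive eigenvalue $\lambda_0$, the unique root of $\beta^* G_\lambda(0,0) = 1$, and the long-time behavior of the semigroup is dominated by the corresponding spectral projection, giving $q(t,x,y) \sim C(x,y) e^{\lambda_0 t}$ and $q(t,x) \sim C(x) e^{\lambda_0 t}$. Multiplying by $e^{-b_0 t}$ and using $\lambda_{\mathcal{E}} = \lambda_0 - b_0$ yields case $a)$ at once.

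For $\beta^* = \beta_c$ and $\beta^* < \beta_c$ there is no isolated eigenvalue and the asymptotics are governed by the edge of the spectrum at $\lambda = 0$, equivalently by the large-time behavior of the return probability $p(t,0,0)$. The heavy-tail condition~\eqref{HT:def} enters exactly here: the symbol of $\mathcal{A}$ behaves like $|\theta|^\alpha$ near the origin, whence $p(t,0,0) \sim c\, t^{-d/\alpha}$, and the finiteness or divergence of the convolution integrals in the integral equations of the type~\eqref{eq3}--\eqref{eq5} (now written for $q$) is controlled by whether $d/\alpha$ exceeds $1$ or $2$. This produces the three subcases of $b)$ (for $d/\alpha \in (1,2)$, $d/\alpha = 2$, $d/\alpha > 2$) and the single rate $t^{-d/\alpha}$ for $u^*$ in $c)$, while the total-population moment $q(t,x)$ stays bounded and tends to a constant in the subcritical regime, yielding $v^*(t) = e^{-b_0 t}$ after the substitution. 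The restriction $d/\alpha > 1$ in $b)$ and $c)$ is consistent with $\beta_c > 0$, i.e. transience.

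The hard part is not the transfer itself but justifying that the classification of~\cite{BRWht} may be quoted without change, with $\beta$ replaced by $\beta^*$ and the same constants $C(x,y), C(x)$: one must check that the integral equations satisfied by $q$ coincide, after the substitution, with those underlying~\cite{BRWht}, so that the identical Tauberian and spectral-edge estimates apply. Since, as noted after Theorem~\ref{th_int}, the derivation of these integral equations does not depend on the jump-variance assumption and has the same structure as in~\cite{BRWht}, this identification is legitimate; the only genuinely heavy-tail input is the rate $p(t,0,0) \sim c\, t^{-d/\alpha}$, which replaces the finite-variance rate $t^{-d/2}$ used in Theorem~\ref{th_fm}, and the delicate point throughout is the critical case $\beta^* = \beta_c$, where the eigenvalue merges with the spectral edge and the precise power of $t$ must be read off from the local regularity of $G_\lambda(0,0)$ at $\lambda = 0$.
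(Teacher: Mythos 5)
Your proposal is correct and follows essentially the same route as the paper: the change of variables $m_1 = q e^{-b_0 t}$ reduces the first-moment equation to $dq/dt = \mathcal{A}q + \beta^* \Delta_0 q$, which is exactly the first-moment equation of the absorption-free BRW, so the heavy-tail classification of~\cite{BRWht} (with branching intensity $\beta^*$ and the same constants) is quoted and multiplied by $e^{-b_0 t}$, with $\lambda_{\mathcal{E}} = \lambda_0 - b_0$ giving case $a)$ and the transience remark $\beta_c = 0$ for $d/\alpha \in (1/2,1]$ explaining the restriction $d/\alpha > 1$ in cases $b)$ and $c)$. Your additional discussion of the spectral edge, the return-probability rate $t^{-d/\alpha}$, and the Tauberian input merely unpacks what is internal to~\cite{BRWht} and does not change the argument.
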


Note that for ${\beta^* > \beta_c}$ the obtained asymptotic relations do not depend on the conditions imposed on the variance of random walk jumps (see \cite{limth}). In addition, $\beta^*>0$, while $\beta_c = 0$ for $d/\alpha \in (1/2,1]$, so in the above classification for ${\beta^* \leq \beta_c}$ there are no cases where $d/\alpha \in (1/2,1]$, in contrast to the classification of the asymptotic behavior of the first moments in~\cite{BRWht}.

\section{Supercritical Case}\label{SupercriticalBRW}
\begin{theorem} \label{th8}
	Let ${\beta^* > \beta_c}$ and $\lambda_{\mathcal{E}} > 0$. Then for ${t \rightarrow \infty}$ and all ${n \in \mathbb{N}}$ the following statements hold:
	\[
	m_n(t,x,y) \sim C_n(x,y)e^{n\lambda_{\mathcal{E}}t}, \qquad m_n(t,x) \sim C_n(x)e^{n\lambda_{\mathcal{E}}t},
	\]
	where
	\[
	C_1(x,y) = \frac{G_{\lambda_0}(x,0) G_{\lambda_0}(0,y)}{\|G_{\lambda_0}(0,y)\|^2},  \qquad C_1(x) = \frac{G_{\lambda_0}(x,0)}{\lambda_0 \|G_{\lambda_0}(0,0)\|^2},
	\]
	and the functions ${C_n(x,y), C_n(x) > 0}$ for ${n \geq 2}$ are defined as follows:
	\begin{align*}
		C_n(x,y) & = g_n(C_1(0,y), \dots, C_{n-1}(0,y)) D_n(x), \\
		C_n(x)   & =  g_n(C_1(0), \dots, C_{n-1}(0)) D_n(x),
	\end{align*}
	where $D_n(x)$ are certain functions satisfying the estimate $|D_n(x)| \leq \frac{2}{n\lambda_{\mathcal{E}}}$ for $n \geq n_*$ and some ${n_* \in \mathbb{N}}$.
\end{theorem}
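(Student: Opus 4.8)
The plan is to proceed by induction on $n$, using the integral equations~\eqref{eq3}--\eqref{eq6} together with the first-moment asymptotics from Theorem~\ref{th_fm} (item $a$), which gives $m_1 \sim C_1 e^{\lambda_{\mathcal{E}} t}$. The base case $n=1$ is exactly the supercritical branch of the first-moment classification, so the explicit constants $C_1(x,y)$ and $C_1(x)$ are inherited from the spectral analysis of $\mathcal{A}+\beta^*\Delta_0$ near its isolated eigenvalue $\lambda_0$; here $G_{\lambda_0}(\cdot,0)$ is the associated eigenfunction and the denominators are the appropriate normalizations. The engine of the induction is the higher-moment equation~\eqref{eq6}: assuming $m_j(t,\cdot) \sim C_j(\cdot) e^{j\lambda_{\mathcal{E}} t}$ for all $j<n$, I would substitute these into $g_n(m_1,\dots,m_{n-1})$ and track the dominant exponential growth of the convolution integral.

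The key observation is that $g_n$ is a weighted sum of products $m_{i_1}\cdots m_{i_r}$ with $i_1+\cdots+i_r=n$, so each term grows like $e^{n\lambda_{\mathcal{E}} s}$; hence $g_n(m_1(s,0,y),\dots) \sim g_n(C_1(0,y),\dots,C_{n-1}(0,y))\,e^{n\lambda_{\mathcal{E}} s}$. Substituting into
\[
m_n(t,x,y) = m_1(t,x,y) + \int_0^t m_1(t-s,x,0)\,g_n(m_1(s,0,y),\dots)\,ds,
\]
and using $m_1(t-s,x,0)\sim C_1(x,0)e^{\lambda_{\mathcal{E}}(t-s)}$, the integrand behaves like $C_1(x,0)\,g_n(\cdots)\,e^{\lambda_{\mathcal{E}} t}\,e^{(n-1)\lambda_{\mathcal{E}} s}$. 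Since $n\geq 2$ and $\lambda_{\mathcal{E}}>0$, the factor $e^{(n-1)\lambda_{\mathcal{E}} s}$ makes the integral converge to its value driven by the upper limit, producing overall growth $e^{n\lambda_{\mathcal{E}} t}$ and isolating the constant $D_n(x)$ as (essentially) $\int_0^\infty C_1(x,0)e^{-(n-1)\lambda_{\mathcal{E}} s}\,ds$ after the change of variable; this is where the bound $|D_n(x)|\leq \frac{2}{n\lambda_{\mathcal{E}}}$ should emerge, since the leading term of such an integral scales like $(n\lambda_{\mathcal{E}})^{-1}$ and the remainder is controlled for $n$ large enough. The total-number moments $m_n(t,x)$ are handled identically using the second equation in~\eqref{eq6} and item $a$ for $v^*(t)$.

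The main obstacle is controlling the error terms uniformly enough to pass from the asymptotic equivalences of the $m_j$ ($j<n$) to an honest asymptotic equivalence for $m_n$, rather than merely a matching growth rate. Concretely, the substitution $g_n(m_1(s,0,y),\dots)=g_n(C_1(0,y),\dots)e^{n\lambda_{\mathcal{E}} s}(1+o(1))$ must be split into a principal part and a remainder, and I would need to show that the contribution of the remainder, once convolved against $m_1(t-s,x,0)$, is of strictly smaller exponential order than $e^{n\lambda_{\mathcal{E}} t}$. The delicate region is small $s$ (where the asymptotics for $m_j$ are not yet accurate) versus large $s$ near $t$ (where they are), and one must verify that the small-$s$ contribution, weighted by the decaying $e^{-(n-1)\lambda_{\mathcal{E}} s}$, is negligible. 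Establishing the explicit bound $|D_n(x)|\leq \frac{2}{n\lambda_{\mathcal{E}}}$ for $n\geq n_*$ is the quantitative heart of the argument: I expect it to follow from a careful estimate of the convergent integral defining $D_n$, exploiting that $e^{-(n-1)\lambda_{\mathcal{E}} s}$ concentrates the mass near $s=0$ as $n\to\infty$, so that $D_n(x)\approx C_1(x,0)/(n\lambda_{\mathcal{E}})$ and the factor $2$ absorbs the lower-order corrections for all sufficiently large $n$.
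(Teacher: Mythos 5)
Your overall strategy---induction on $n$ through the integral equations \eqref{eq6}, with the supercritical first-moment asymptotics of Theorem~\ref{th_fm} as the base case---is legitimate, and it is genuinely different from the paper's route: the paper substitutes $\nu_n = m_n e^{-n\lambda_{\mathcal{E}}t}$ into the differential equations of Theorem~\ref{difeq_m}, observes that $\mathcal{E}_n := \mathcal{E}-n\lambda_{\mathcal{E}}I$ has spectrum contained in $(-\infty,-(n-1)\lambda_{\mathcal{E}}]$, and then invokes the Banach-space ODE limit argument of \cite{limth}, which yields $D_n(x)$ as $(-\mathcal{E}_n^{-1}\delta_0)(x)$ and fixes $n_* = 2\|\mathcal{E}\|/\lambda_{\mathcal{E}}$. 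Your treatment of the small-$s$ region of the convolution is also correct: there $g_n(m_1(s,0,y),\dots)$ is merely bounded while $m_1(t-s,x,0)=O(e^{\lambda_{\mathcal{E}}t})$, so that piece contributes $O(e^{\lambda_{\mathcal{E}}t})=o(e^{n\lambda_{\mathcal{E}}t})$.

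The genuine gap is at the opposite endpoint, $s$ near $t$, and your proposal has it backwards. You claim the asymptotics are accurate ``near $t$'' and that only the small-$s$ remainder needs to be shown negligible; but in the factor $m_1(t-s,x,0)$ the \emph{argument} $t-s$ is small precisely when $s$ is near $t$, so that factor is out of its asymptotic regime exactly where, after the change of variable $u=t-s$, the rescaled integral
\[
e^{-n\lambda_{\mathcal{E}}t}\int_0^t m_1(t-s,x,0)\,g_n(\dots)(s)\,ds \;\longrightarrow\; g_n\bigl(C_1(0,y),\dots,C_{n-1}(0,y)\bigr)\int_0^{\infty} m_1(u,x,0)\,e^{-n\lambda_{\mathcal{E}}u}\,du
\]
concentrates its mass. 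That region is dominant, not negligible: replacing $m_1(u,x,0)$ by $C_1(x,0)e^{\lambda_{\mathcal{E}}u}$ at small $u$ perturbs the constant by an amount of the same order $O(1/(n\lambda_{\mathcal{E}}))$ as the main term, so your identification $D_n(x)\approx\int_0^\infty C_1(x,0)e^{-(n-1)\lambda_{\mathcal{E}}s}\,ds = C_1(x,0)/((n-1)\lambda_{\mathcal{E}})$ produces the wrong constant and, taken literally, a false asymptotic equivalence (the claimed growth rate $e^{n\lambda_{\mathcal{E}}t}$ survives, but the stated $\sim$ would not). The repair is straightforward and lands you on the paper's answer: never substitute the asymptotics of $m_1$ inside the convolution; write $g_n(\dots)(s)=g_n(C_1(0,y),\dots,C_{n-1}(0,y))e^{n\lambda_{\mathcal{E}}s}(1+\varepsilon(s))$ with $\varepsilon(s)\to0$ (valid since $g_n$ has nonnegative coefficients $\beta^{(r)}/r!$, so equivalence passes through the sum), and use dominated convergence with the a priori bound $m_1(u,x,0)=(e^{u\mathcal{E}}\delta_0)(x)\le e^{\lambda_{\mathcal{E}}u}$. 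This gives $D_n(x)=\int_0^\infty m_1(u,x,0)e^{-n\lambda_{\mathcal{E}}u}\,du=\bigl((n\lambda_{\mathcal{E}}I-\mathcal{E})^{-1}\delta_0\bigr)(x)$, i.e.\ exactly the resolvent the paper obtains spectrally, and the estimate $|D_n(x)|\le 1/((n-1)\lambda_{\mathcal{E}})\le 2/(n\lambda_{\mathcal{E}})$ follows at once---indeed for all $n\ge 2$, which is even slightly sharper than the $n\ge n_*$ formulation obtained in the paper via $\operatorname{dist}(0,\spec(\mathcal{E}_n))\ge n\lambda_{\mathcal{E}}-\|\mathcal{E}\|$.
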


\begin{proof}
	In the case under consideration the operator $\mathcal{E}$ has an isolated positive eigenvalue $\lambda_{\mathcal{E}} = \lambda_0 - b_0$, where $\lambda_0$ is an isolated positive eigenvalue of the operator $\mathcal{H} = \mathcal{A} + \beta^* \Delta_0$.
	
	For ${n \in \mathbb{N}}$ we consider the functions $\nu_n := \nu_n (t,x,y) = m_n(t,x,y) e^{-n\lambda_{\mathcal{E}}t}$. From Theorem \ref{difeq_m} we obtain the following equations for $\nu_n$:
	\[
	\begin{cases}
		\partial_t \nu_1 & = \mathcal{E} \nu_1 - \lambda_{\mathcal{E}}
		\nu_1,                                                                  \\
		\partial_t \nu_n & = \mathcal{E} \nu_n - n\lambda_{\mathcal{E}} \nu_n +
		\delta_0(x)g_n(\nu_1, \dots, \nu_{n-1}),\quad n \geq 2
	\end{cases}
	\]
	with the initial conditions ${\nu_n(0, \cdot, y) = \delta_y(\cdot)}$, ${n \in \mathbb{N}}$.
	
	We define the operator $\mathcal{E}_n$ by setting ${\mathcal{E}_n := \mathcal{E} - n\lambda_{\mathcal{E}}}I$. Since $\lambda_{\mathcal{E}}$ is the largest eigenvalue of $\mathcal{E}$, the spectrum of $\mathcal{E}_n$ for ${n \geq 2}$ is included into ${(-\infty, -(n-1)\lambda_{\mathcal{E}}]}$, that is, it is on the negative semiaxis, since $\lambda_{\mathcal{E}} > 0$.
	
	Further, arguments similar to those given in \cite{limth} in the proof of a similar theorem remain valid.
	
	The value of $n_*$ from the statement of the theorem is determined by the formula ${n_*:= \frac{2\|\mathcal{E}\|}{\lambda_{\mathcal{E}}}}$. The theorem is proved.
\end{proof}

For the number of particles in the case under consideration the following limit theorem is true, the proof of which is carried out according to the scheme of proof of the limit theorem obtained in~\cite{limth}, so we present only the main parts of the proof.

\begin{theorem} \label{limth}
	Let ${\beta^* > \beta_c}$ and $\lambda_{\mathcal{E}} > 0$. If $\beta^{(r)} = O(r!r^{r-1})$ for all sufficiently large ${r \in \mathbb{N}}$, then the following statements hold in the sense of convergence in distribution
	\[ \lim_{t \rightarrow \infty} \mu(t,y) e^{-\lambda_{\mathcal{E}} t} = \xi \psi(y), \qquad \lim_{t \rightarrow \infty} \mu(t) e^{-\lambda_{\mathcal{E}} t} = \xi,
	\]
	where $\psi(y)$ is some non-negative function and $\xi$ is a nondegenerate random variable.
\end{theorem}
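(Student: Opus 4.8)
The plan is to proceed by the method of moments, exploiting the fact that Theorem~\ref{th8} already supplies the limiting values of all integer moments. Put $X_t:=\mu(t)e^{-\lambda_{\mathcal{E}}t}$ and $Y_t:=\mu(t,y)e^{-\lambda_{\mathcal{E}}t}$. Theorem~\ref{th8} gives directly that, for every fixed $n$,
\[
\mathsf{E}_x X_t^n=m_n(t,x)e^{-n\lambda_{\mathcal{E}}t}\to C_n(x),\qquad \mathsf{E}_x Y_t^n=m_n(t,x,y)e^{-n\lambda_{\mathcal{E}}t}\to C_n(x,y)
\]
as $t\to\infty$. Hence every moment converges to a finite limit, and by the Fr\'echet--Shohat theorem it remains only to show that each of the sequences $\{C_n(x)\}$ and $\{C_n(x,y)\}$ determines a unique law on $[0,\infty)$.

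The decisive step is to control the growth of the moments $C_n$. For this I would combine the recursion $C_n(x)=g_n(C_1(0),\dots,C_{n-1}(0))\,D_n(x)$ of Theorem~\ref{th8}, the bound $|D_n(x)|\le 2/(n\lambda_{\mathcal{E}})$ valid for $n\ge n_*$, and the hypothesis $\beta^{(r)}=O(r!\,r^{r-1})$. The leading contribution to $g_n$ comes from the term $r=n$ with all indices equal to $1$, which equals $\beta^{(n)}C_1(0)^n$; inserting $\beta^{(n)}=O(n!\,n^{n-1})$ and the factor $1/n$ from $D_n$, Stirling's formula turns this into a quantity of order $(n!)^2K^n$. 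Carrying this through as an induction, and checking that the terms with $r<n$ do not exceed this order, one obtains an estimate $C_n(x)\le K^n\,(n!)^2$ with a constant $K=K(x)$, and likewise for $C_n(x,y)$. Such a bound gives $C_n^{-1/(2n)}\ge c/n$ for large $n$, so Carleman's criterion $\sum_n C_n^{-1/(2n)}=\infty$ is satisfied -- the harmonic lower bound being exactly enough -- and both moment problems are determinate. I expect this estimate to be the principal difficulty: the combinatorial weight $r^{r-1}$ (the number of labelled trees on $r$ vertices) forces $C_n$ to grow like $(n!)^2$, so the hypothesis on $\beta^{(r)}$ is calibrated precisely to place $\{C_n\}$ at the borderline of the Carleman regime, where the compensating gain $1/n$ from $D_n$ is essential.

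Granting determinacy, Fr\'echet--Shohat furnishes a non-negative random variable $\xi$, with $n$-th moment $C_n(x)$, as the distributional limit of $X_t$; its nondegeneracy follows from the strict inequality $C_2(x)>C_1(x)^2$. To identify the local limit I define $\psi(y):=C_1(x,y)/C_1(x)$ and note that the common factor $G_{\lambda_0}(x,0)$ cancels, so $\psi$ is a non-negative function of $y$ alone. Since every term of $g_n$ satisfies $i_1+\dots+i_r=n$, replacing each argument $C_k(0)$ by $C_k(0,y)=\psi(y)^kC_k(0)$ multiplies $g_n$ by $\psi(y)^n$; induction on $n$ then yields $C_n(x,y)=\psi(y)^nC_n(x)$ for all $n$, so $Y_t$ converges in law to a variable with the moments of $\psi(y)\xi$. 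Finally, to establish that one and the same $\xi$ drives both limits, I would show $Y_t-\psi(y)X_t\to 0$ in $L^2$: expanding $\mathsf{E}_x(Y_t-\psi(y)X_t)^2$ and letting $t\to\infty$ produces $C_2(x,y)-2\psi(y)\,M_2(x,y)+\psi(y)^2C_2(x)$, where $M_2(x,y):=\lim_{t\to\infty}\mathsf{E}_x\bigl(\mu(t,y)\mu(t)\bigr)e^{-2\lambda_{\mathcal{E}}t}$ is evaluated by the same scheme and equals $\psi(y)C_2(x)$; together with $C_2(x,y)=\psi(y)^2C_2(x)$ this expression vanishes. Slutsky's theorem then upgrades the two marginal convergences to the joint statement $Y_t\to\psi(y)\xi$ and $X_t\to\xi$, completing the proof.
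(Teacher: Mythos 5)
Your proposal follows essentially the same route as the paper's proof: convergence of all integer moments from Theorem~\ref{th8}, determinacy of the Stieltjes moment problem via Carleman's condition based on a growth estimate for $C_n$, and the factorization $C_n(x,y)=\psi^n(y)C_n(x)$. Your bound $C_n(x)\le K^n(n!)^2$ is, by Stirling, the same class as the paper's $C_n(x)\le\gamma^{n-1}n!\,n^n$ (note $n^n\asymp n!\,e^n n^{-1/2}$), and either form gives $C_n^{1/(2n)}=O(n)$, hence divergence of $\sum_n C_n^{-1/(2n)}$ --- your observation that the harmonic lower bound is exactly what survives is the correct calibration. One genuine improvement on the paper's exposition: you \emph{prove} the factorization $C_n(x,y)=\psi^n(y)C_n(x)$ by induction, using that every term of $g_n$ is homogeneous of total degree $i_1+\dots+i_r=n$ and that $G_{\lambda_0}(x,0)$ cancels in $C_1(x,y)/C_1(x)$; the paper simply cites \cite{limth} for this relation.

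Two caveats. First, the inductive bookkeeping you wave at (``checking that the terms with $r<n$ do not exceed this order'') is the genuinely delicate point, and your hypothesis shape $C_k\le K^k(k!)^2$ with a fixed base $K$ does not close naively: the weights $\beta^{(r)}/r!\asymp r^{r-1}$ behave like $(r-1)!\,e^r$ up to polynomial factors, so the compositions with $r$ comparable to $n$ inflate the geometric base by a factor $e$ at each step unless one either builds the slack into the shape (this is why \cite{limth}, which the paper invokes with $N=1$, runs the induction with $\gamma^{n-1}n!\,n^n$) or exploits Abel/rooted-forest identities attached to the weights $r^{r-1}$ --- which, incidentally, count \emph{rooted} labelled trees on $r$ vertices ($r^{r-2}$ counts unrooted ones), a small slip in your aside. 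The bound you state is true, so the conclusion stands, but as written this step is at the level of assertion; the paper outsources exactly the same step by citation. Second, your $L^2$ identification $Y_t-\psi(y)X_t\to0$ is superfluous for the statement as formulated (the theorem asserts two marginal distributional limits, with the local limit equal in law to $\psi(y)\xi$, which already follows from $m(n,x,y)=m(n,x)$), and moreover it rests on the unproved claim $M_2(x,y)=\psi(y)C_2(x)$: evaluating the mixed moment $\mathsf{E}_x\bigl(\mu(t,y)\mu(t)\bigr)e^{-2\lambda_{\mathcal{E}}t}$ would require an evolution equation for $\mathsf{E}_x[\mu(t,y)\mu(t)]$ that neither you nor the paper derives. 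Drop that step, or derive the mixed-moment equation; as it stands it is the only unsupported link in your chain, and fortunately an unnecessary one. (You also assert nondegeneracy via $C_2(x)>C_1^2(x)$ without verifying the strict inequality, but the paper is equally silent there, inheriting it from \cite{limth}.)
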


\begin{proof}
	Let us define the functions
	\begin{alignat*}{4}
		m(n,x,y)                        & := \lim_{t \rightarrow \infty} \frac{\mathsf{E}_x \mu^n(t,y)}{m_1^n(t,x,y)} &                                & = \lim_{t \rightarrow \infty}
		\frac{m_n(t,x,y)}{m_1^n(t,x,y)} &                                                                             & = \frac{C_n(x,y)}{C_1^n(x,y)},                                                                                             \\
		m(n,x)                          & := \lim_{t \rightarrow \infty} \frac{\mathsf{E}_x \mu^n(t)}{m_1^n(t,x)}     &                                & = \lim_{t \rightarrow \infty} \frac{m_n(t,x)}{m_1^n(t,x)} &  & = \frac{C_n(x)}{C_1^n(x)}.
	\end{alignat*}
	As shown, for example, in \cite{limth}, the functions $C_n(x,y)$ and $C_n(x)$ for ${\beta^* > \beta_c}$ for all ${n \in \mathbb{N}}$ are related by the relation ${C_n(x,y) = \psi^n(y)C_n(x)}$, where $\psi(y)$ is some function, from which the next equalities follow
	\[
	m(n,x,y) = m(n,x) = \frac{C_n(x)}{C_1^n(x)} = \frac{C_n(x,y)}{C_1^n(x,y)}.
	\]
	
	From Theorem~\ref{th8} we have this theorem statements in terms of convergence of the moments of the random variables $\xi(y) =\psi(y)\xi$ and $\xi$.
	
	The distributions of the limit random variables $\xi(y)$ and $\xi$ to be uniquely determined by their moments if the Carleman condition is satisfied
	\[
	\sum_{n = 1}^{\infty} m(n,x,y)^{-1/(2n)} = \infty, \quad \sum_{n = 1}^{\infty} m(n,x)^{-1/(2n)} = \infty.
	\]
	Assuming ${N=1}$ in the notation from \cite{limth} and defining $n_{*}$ as in Theorem~\ref{th8}, we obtain ${C_n(x) \leq \gamma^{n-1}n!n^n}$, where $\gamma$ is some constant, from here and from the estimate $n! \leq ((n+1)/2)^n$ we get
	\[
	\sum_{n = 1}^{\infty} m(n,x)^{-1/(2n)} = \sum_{n = 1}^{\infty} \left(\frac{C_n(x)}{C_1^n(x)} \right)^{-1/(2n)}= \infty.
	\]
	The proof for $m(n,x,y)$ is similar.
	
	Thus, the Stieltjes moment problem has a unique solution, hence the relations from the formulation of the theorem are valid in terms of convergence in distribution. The theorem is proved. \end{proof}

Note that the obtained limit theorem is true without restrictions on the variance of random walk jumps, see~\cite{limth}.

\section{Moments in a Critical Case}
\label{CriticalBRW}

\begin{theorem} \label{thCritical}
	Let ${\beta^* > \beta_c}$ and $\lambda_{\mathcal{E}} = 0$. Then for ${t \rightarrow \infty}$ and all ${n \in \mathbb{N}}$ the following statements hold
	\[
	m_n(t,x,y) \sim J_n(x,y)t^{n-1}, \qquad m_n(t,x) \sim J_n(x) t^{n-1},
	\]
	where $J_n(x,y)$ and $J_n(x)$ are some constants.
\end{theorem}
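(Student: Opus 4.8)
The plan is to argue by induction on $n$, using the integral equations \eqref{eq6} together with the first-moment asymptotics of Theorems~\ref{th_fm} and~\ref{th_fm_ht}. For the base case $n=1$ I would simply invoke part~$a)$ of those theorems: since $\beta^* > \beta_c$ and $\lambda_{\mathcal{E}} = 0$ one has $u^*(t) = v^*(t) = e^{\lambda_{\mathcal{E}} t} = 1$, so $m_1(t,x,y) \to C(x,y) =: J_1(x,y)$ and $m_1(t,x) \to C(x) =: J_1(x)$ as $t \to \infty$, which is exactly the assertion with $t^{n-1} = 1$. In particular $m_1(t,x,0) \to C(x,0)$, a finite positive constant, and $m_1(\cdot,x,0)$ is bounded on $[0,\infty)$. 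This constant limit of the convolution kernel is the feature specific to the critical regime, and it is what drives the whole argument.

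For the inductive step, assume $m_j(t,\cdot,y) \sim J_j(\cdot,y)\,t^{j-1}$ and $m_j(t,\cdot) \sim J_j(\cdot)\,t^{j-1}$ for all $j < n$. I would first determine the growth of the inhomogeneity $g_n$ evaluated at the lower moments. By the induction hypothesis each monomial $m_{i_1}\cdots m_{i_r}$ with $i_1 + \cdots + i_r = n$ and $i_j \geq 1$ behaves like a constant times $s^{(i_1-1)+\cdots+(i_r-1)} = s^{\,n-r}$, so the dominant power is $s^{\,n-2}$, coming precisely from the terms with $r=2$; recall $\beta^{(r)} = f^{(r)}(1) \geq 0$ for $r \geq 2$, being a sum of nonnegative terms. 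This gives
\[
g_n\bigl(m_1(s,0,y),\dots,m_{n-1}(s,0,y)\bigr) \sim K_n(y)\,s^{\,n-2}, \qquad
K_n(y) := \frac{\beta^{(2)}}{2}\sum_{i=1}^{n-1}\binom{n}{i} J_i(0,y)\,J_{n-i}(0,y),
\]
with the analogous constant $K_n$ (built from the $J_i(0)$) for the total-number moments; the contributions of all $r \geq 3$ are $O(s^{\,n-3})$ and hence negligible after integration.

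It then remains to extract the asymptotics of the convolution in \eqref{eq6}. Writing $f(u) := m_1(u,x,0) \to C(x,0)$ and $h(s) := g_n(\dots) \sim K_n(y)\,s^{\,n-2}$, I would split $f(t-s) = C(x,0) + \bigl(f(t-s) - C(x,0)\bigr)$. The main term yields $C(x,0)\int_0^t h(s)\,ds \sim \frac{C(x,0)\,K_n(y)}{n-1}\,t^{\,n-1}$, which identifies
\[
J_n(x,y) = \frac{C(x,0)\,K_n(y)}{n-1}, \qquad J_n(x) = \frac{C(x,0)\,K_n}{n-1};
\]
since $m_1(t,x,y) = O(1) = o(t^{\,n-1})$ for $n \geq 2$, the free term in \eqref{eq6} does not contribute. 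The remainder $\int_0^t \bigl(f(t-s) - C(x,0)\bigr)h(s)\,ds$ must be shown to be $o(t^{\,n-1})$: splitting the range at $s = t/2$ and using on one half that $\int_0^{t/2}|f(u)-C(x,0)|\,du = o(t)$ (a Ces\`aro consequence of $f(u) \to C(x,0)$) against $h(t-u) = O(t^{\,n-2})$, and on the other half that $\sup_{u \geq t/2}|f(u)-C(x,0)| \to 0$ against $\int_0^{t/2} h(v)\,dv = O(t^{\,n-1})$, both pieces are $o(t^{\,n-1})$. The identical computation applied to the second equation in \eqref{eq6} handles $m_n(t,x)$.

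The main obstacle is precisely this convolution asymptotics: one must control two subleading effects simultaneously — the approach of the kernel $m_1(t-s,x,0)$ to its constant limit and the lower-order ($r \geq 3$) terms inside $g_n$ — and verify that each contributes only $o(t^{\,n-1})$ uniformly enough to pin down the leading constant. Nondegeneracy is automatic here: the standing assumption $\beta^* > 0$ forces $\beta^{(2)} = \sum_{n \geq 2} n(n-1) b_n > 0$, so the recursion yields $J_n(x,y), J_n(x) > 0$ for every $n$ by induction from $J_1 = C > 0$. Finally, because the argument uses only the constant first-moment limits of part~$a)$, which (by the remarks following Theorems~\ref{th_fm} and~\ref{th_fm_ht}) hold irrespective of the variance of the jumps, the result is valid in both the finite- and infinite-variance settings without a separate case analysis.
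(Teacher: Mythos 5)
Your proposal is correct, but it follows a genuinely different route from the paper's. The paper works with the differential equations of Theorem~\ref{difeq_m} and the spectral decomposition $l^2(\mathbb{Z}^d)=E_{f}\oplus E_{f}^{\perp}$ attached to the eigenfunction $f$ of $\mathcal{E}$ at the eigenvalue $\lambda_{\mathcal{E}}=0$: projecting the equation onto $f$ annihilates the operator term, so integrating the forcing $f(0)g_n(\dots)$ produces the $t^{n-1}$ growth along $E_{f}$, while on $E_{f}^{\perp}$, where the spectrum lies in $(-\infty,-s]$ with $s>0$, solutions driven by polynomial forcing stay one order lower (of order $t^{n-2}$), so the $E_{f}$-component dominates; the asymptotics of $m_n(t,x)$ are then read off from the integral equations. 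You instead run the induction entirely through the integral equations \eqref{eq6}: you isolate the dominant $r=2$ block of $g_n$ (giving $K_n(y)s^{n-2}$, with the $r\geq 3$ terms of order $s^{n-3}$ and hence negligible), and you extract the convolution asymptotics by writing the kernel as $m_1(t-s,x,0)=C(x,0)+o(1)$ and controlling the remainder with the two-sided split at $s=t/2$ — a Ces\`aro bound $\int_0^{t/2}|m_1(u,x,0)-C(x,0)|\,du=o(t)$ against the $O(t^{n-2})$ forcing on one half, a sup bound against $\int_0^{t/2}h=O(t^{n-1})$ on the other — which is a valid and complete argument. What your route buys: it is elementary (no spectral calculus of self-adjoint operators, no Hilbert-space ODE lemmas), it yields an explicit recursion $J_n(x,y)=C(x,0)K_n(y)/(n-1)$ together with strict positivity of the constants via $\beta^{(2)}>0$ (forced by the standing assumption $\beta^*>0$) — a point the paper leaves implicit behind ``some constants'' — and it treats $m_n(t,x,y)$ and $m_n(t,x)$ symmetrically, making the variance-independence transparent. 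What the paper's route buys: it reuses the machinery deployed elsewhere (the same decomposition, combined with Lemma~\ref{lemm1}, drives the subcritical Theorem~\ref{thSubcritical1}), it expresses the constant through the eigenfunction, $J_2(x,y)=f(x)f(0)g_2(C(0,y))/\langle f,f\rangle$ — consistent with your formula once one notes $C(x,0)=f(x)f(0)/\langle f,f\rangle$ — and it pinpoints structurally why each moment order gains exactly one power of $t$: the simple zero eigenvalue contributes the integration, and the rest of the spectrum contributes nothing.
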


\begin{proof}
	
	The proof will be carried out for $m_n(t,x,y)$ using the asymptotic relation for the first moment and the equations for the higher moments. The limit relations for $m_n(t,x)$ follow from the form of the integral equations \ref{th_int} and the asymptotics for $m_n(t,x,y)$.
	
	In the case ${\beta^* > \beta_c}$ the operator $\mathcal{E}$ has a unique isolated eigenvalue $\lambda_{\mathcal{E}} = \lambda_0 - b_0$, which is zero in this case, consider its corresponding eigenfunction $f(x) \in l^2(\mathbb{Z}^d)$.
	
	Consider first the second moment $m_2(t,x,y)$, which satisfies the equation
	\[
	\partial_t m_2(t,x,y) = \mathcal{E}m_2(t, x, y) + \delta_0(x)g_2(m_1(t,x,y)).
	\]
	Multiplying this equation scalarly by $f$, we get
	\[
	{\partial_t \langle f, m_2(t,x,y) \rangle} =  {f(0)g_2(m_1(t,0,y))}.
	\]
	Denote ${h(t,y) := \langle f, m_2(t,x,y) \rangle}$, then the function $h(t,y)$ satisfies the equation
	\[
	\partial_t h(t,y) = f(0)g_2(m_1(t,0,y))
	\]
	with the initial condition
	\[
	h(0,y) = \langle f, m_2(0,x,y) \rangle = \langle f, \delta_0(x-y) \rangle = f(y),
	\]
	whose solution has the form
	\[
	h(t,y) = f(y) + \int_{0}^{t} f(0)g_2(m_1(\tau,0,y))\,d\tau.
	\]
	Since for $m_1(t,0,y)$ we have $m_1(t,0,y) \sim C(0,y)$ as ${t \rightarrow \infty}$, then for $h(t,y)$ as ${t \rightarrow \infty}$ the following limit relation holds
	\[
	h(t,y) \sim tf(0)g_2(C(0,y)).
	\]
	Denote by $E_{f}$ the eigensubspace of the operator $\mathcal{E}$ corresponding to the eigenvalue $\lambda_{\mathcal{E}}$, i.e. $E_{f}:= \{tf:~ t\in\mathbb{R}\}$. Via $E_{f}^{\perp}$ we will further denote the orthogonal complement to the subspace $E_{f}$. Then ${l^2(\mathbb{Z}^d) = E_{f}\oplus E_{f}^{\perp}}$, that is, for any ${v \in l^2(\mathbb{Z}^d)}$ there are unique ${\alpha \in \mathbb{C}}$ and ${v_1 \in E_{f}^{\perp}}$ such that ${v = \alpha f + v_1}$. Since $f$ is an eigenfunction of the self-adjoint operator $\mathcal{E}$, then $E_{f}^{\perp}$ is an eigensubspace of the operator $\mathcal{E}$, that is, ${\mathcal{E}E_{f}^{\perp} \subseteq E_{f}^{\perp}}$.
	
	Since $\lambda_{\mathcal{E}} = 0$ is a simple eigenvalue corresponding to the eigenfunction $f$, it is not a point of the spectrum of the operator $\mathcal{E}$ restricted to $E_{f}^{\perp}$, so the spectrum of this operator lies on the negative semiaxis and is separated from zero. Let's use the property, which was noted, for example, in~\cite{YarBRW}: if the spectrum of a self-adjoint continuous operator $\mathcal{H}$ on a Hilbert space is included into $(-\infty, -s]$, $s>0$, and also ${f(t) \rightarrow f_*}$ as $t \rightarrow \infty$, then the solution of the equation
	\[
	{\frac{d\nu}{dt} = \mathcal{H} \nu + f(t)}
	\]
	satisfies ${\nu(t) \rightarrow -\mathcal{H}^{-1}f_*}$ condition.
	
	Since $m_2(t,x,y)$ satisfies the equation
	\[
	\partial_t m_2(t,x,y) = \mathcal{E}m_2(t, x, y) + \delta_0(x)g_2(m_1(t,x,y))
	\]
	and for ${t \rightarrow \infty}$ we have the relation
	\[
	{\delta_0(x)g_2(m_1(t,x,y)) \sim \delta_0(x)g_2(C(x,y)),}
	\]
	we obtain the limit relation that holds on $E_{f}^{\perp}$:
	\[
	m_2(t,x,y)) \sim -\mathcal{E}^{-1}(\delta_0(x)g_2(C(x,y))) =: v_1^*(x,y).
	\]
	We have ${m_2(t,x,y) = \alpha f + v_1}$, where ${\alpha = \frac{\langle f, m_2(t,x,y) \rangle}{\langle f, f \rangle} = \frac{h(t,y)}{\langle f, f \rangle}}$ and $v_1 \sim v_1^*$. For ${t \rightarrow \infty}$ we get the relation
	\[
	m_2(t,x,y) \sim \frac{tf(x)f(0)g_2(C(0,y))}{\langle f, f \rangle}.
	\]
	Denoting ${J_2(x,y) := \frac{f(x)f(0)g_2(C(0,y))}{\langle f, f \rangle}}$, we get that $m_2(t,x,y) \sim J_2(x,y)t.$
	
	Further, we continue similarly, using the asymptotics for the moments obtained at the previous step. On the subspace $E_{f}$, carrying out similar reasoning, for $m_n(t,x,y)$ we obtain the asymptotics
	\[
	m_n(t,x,y) \sim J_n^{(1)} t^{n-1}, \quad t\to \infty,
	\]
	where $J_n^{(1)}$ is some constant. On the subspace $E_{f}^{\perp}$ we use the following property: if the spectrum of a self-adjoint continuous operator $\mathcal{H}$ on a Hilbert space is included into ${(-\infty, -s]}$, ${s>0}$, and ${f(t) = P_n(t)}$, where $P_n(t)$ is a  polynomial of degree $n$, then the solution of the equation
	\[
	{\frac{d\nu}{dt} = \mathcal{H} \nu + f(t)}
	\]
	satisfies ${\nu(t) = Q_n(t) + u(t)}$ condition, where $Q_n(t)$ is a  polynomial of degree $n$ and $u(t)$ is a function that decreases exponentially in $t$. We get that on the subspace $E_{f}^{\perp}$ the asymptotics ${m_n(t,x,y) \sim J_n^{(2)} t^{n-2}}$ is true, where $J_n^{(2)}$ is some constant. So, for $m_n(t,x,y)$ we have
	\[
	m_n(t,x,y) \sim J_n(x,y)t^{n-1}
	\]
	as ${t \rightarrow \infty}$. The theorem is proved. \end{proof}

\section{Moments in a Subcritical Case}
\label{SubcriticalBRW}

To study the asymptotic behavior of the particle number moments for $\lambda_{\mathcal{E}} < 0$ we need an auxiliary lemma.

\begin{lemma}\label{lemm1}
	If the spectrum of a self-adjoint continuous operator $\mathcal{H}$ on a Hilbert space is included into ${(-\infty, -\sigma]}$, ${\sigma>0}$, and $f(t)$ is a function such that $\|f(t)\| < Ce^{-\alpha t}$, where ${C, \alpha > 0}$ are some constants, then the solution of the equation
	\[
	{\frac{d\nu}{dt} = \mathcal{H} \nu + f(t)}
	\]
	satisfies ${\|\nu\| \leq \widetilde{C}_{1} e^{-\min(\alpha, \sigma)t}}$ for ${\alpha \neq \sigma}$ and ${\|\nu\| \leq \widetilde{C}_{2} t e^{-\sigma t}}$ otherwise, where ${\widetilde{C}_{1}, \widetilde{C}_{2}}$ are some constants.
\end{lemma}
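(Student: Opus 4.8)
The plan is to represent the solution through Duhamel's formula and then estimate each term using the spectral properties of $\mathcal{H}$. Since $\mathcal{H}$ is a bounded (continuous) self-adjoint operator, it generates a uniformly continuous semigroup $\{e^{\mathcal{H}t}\}_{t\geq 0}$, and the unique solution of the Cauchy problem can be written as
\[
\nu(t) = e^{\mathcal{H}t}\nu(0) + \int_{0}^{t} e^{\mathcal{H}(t-s)} f(s)\,ds.
\]
The task then reduces to bounding the homogeneous term and the Duhamel integral separately.

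The key ingredient is the norm estimate for the semigroup. Because $\mathcal{H}$ is self-adjoint with $\spec(\mathcal{H}) \subseteq (-\infty, -\sigma]$, the functional calculus gives the identity $\|e^{\mathcal{H}t}\| = \sup_{\lambda \in \spec(\mathcal{H})} e^{\lambda t}$, and since $\lambda \mapsto e^{\lambda t}$ is increasing for $t \geq 0$ and the spectrum lies to the left of $-\sigma$, we obtain
\[
\|e^{\mathcal{H}t}\| \leq e^{-\sigma t}, \qquad t \geq 0.
\]
In particular the homogeneous part satisfies $\|e^{\mathcal{H}t}\nu(0)\| \leq \|\nu(0)\| e^{-\sigma t}$, which already decays at least as fast as $e^{-\min(\alpha,\sigma)t}$.

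For the inhomogeneous part I would combine this semigroup bound with the hypothesis $\|f(s)\| < Ce^{-\alpha s}$ to obtain
\[
\left\| \int_{0}^{t} e^{\mathcal{H}(t-s)} f(s)\,ds \right\| \leq C \int_{0}^{t} e^{-\sigma(t-s)} e^{-\alpha s}\,ds = C e^{-\sigma t} \int_{0}^{t} e^{(\sigma-\alpha)s}\,ds.
\]
Evaluating this elementary integral produces exactly the two cases of the statement. When $\alpha \neq \sigma$ it equals $\tfrac{C}{\sigma-\alpha}\bigl(e^{-\alpha t} - e^{-\sigma t}\bigr)$, which is dominated by $\tfrac{2C}{|\sigma-\alpha|} e^{-\min(\alpha,\sigma)t}$; when $\alpha = \sigma$ the integral is simply $t$, giving the bound $C t e^{-\sigma t}$. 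Adding back the homogeneous contribution and absorbing $\|\nu(0)\|$ and the numerical factors into single constants $\widetilde{C}_1$ and $\widetilde{C}_2$ yields the two claimed estimates.

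This argument is almost entirely routine, and I expect no genuine obstacle. The only point requiring care is the semigroup norm identity, which rests essentially on self-adjointness: for a non-self-adjoint generator one would in general only have $\|e^{\mathcal{H}t}\| \leq M e^{-\sigma t}$ with a possibly larger constant $M$, and the borderline case $\alpha = \sigma$ could then acquire an extra factor rather than remaining the clean $t e^{-\sigma t}$. Since the lemma explicitly assumes $\mathcal{H}$ is self-adjoint, this difficulty does not arise and the proof collapses to the integral computation above.
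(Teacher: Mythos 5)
Your argument is correct and coincides with the paper's own proof: both write the solution via Duhamel's formula, use the self-adjointness and spectral mapping (here via the cited spectral-theory facts) to obtain $\|e^{\mathcal{H}t}\| \leq e^{-\sigma t}$, and then evaluate the elementary integral $Ce^{-\sigma t}\int_0^t e^{(\sigma-\alpha)s}\,ds$ in the two cases $\alpha \neq \sigma$ and $\alpha = \sigma$. Your closing remark about the constant $M$ in the non-self-adjoint setting is a sensible observation but is not needed here, exactly as you note.
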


\begin{proof}
	The solution of the considered equation with the given initial condition ${\nu(0) = \nu_0}$ can be represented explicitly
	\begin{equation}\label{Enu}
		\nu(t) = e^{\mathcal{H}t}\nu_0 + \int_{0}^{t} e^{\mathcal{H} (t-s)} f(s)\,ds.
	\end{equation}
	Let us estimate the norm of each of the terms. To estimate the norm of the first term we recall some properties of the spectrum of a self-adjoint continuous operator on a Hilbert space, denoting the operator's spectrum as $\spec(\cdot)$.
	
	\begin{itemize}
		\item[1.] \cite[Theorem 7.2.6]{spec}: for any self-adjoint operator $\mathcal{H}$ on a Hilbert space the following equality holds
		\[
		\|\mathcal{H}\| = \sup\{|\lambda|: \lambda \text{~is the point of the spectrum } \mathcal{H}\} .
		\]
		
		\item[2.] \cite[Corollary 7.8.10]{spec}: let $\mathcal{H}$ be a self-adjoint operator and $f$ be a continuous complex function on $\spec(\mathcal{H})$. Then
		\[
		\spec(f(\mathcal{H})) = f(\spec(\mathcal{H})).
		\]
		In particular, $\spec(e^{\mathcal{H}t}) = e^{\spec(\mathcal{H})t}.$
	\end{itemize}
	
	Using these properties, we obtain that the first term in \eqref{Enu} satisfies the estimate
\[{\|e^{\mathcal{H}t}\nu_0\| \leq \|e^{\mathcal{H}t}\|\|\nu_0\| = e^{-\sigma t} \|\nu_0\|.}\]
And for the second term for ${\alpha \neq \sigma}$ we have:
	\begin{align*}
		\left\|\int_{0}^{t} e^{\mathcal{H} (t-s)} f(s)\,ds\right\| & \leq \int_{0}^{t} \left\|e^{\mathcal{H} (t-s)}\right\|\|f(s)\|\,ds \leq \int_{0}^{t} e^{-\sigma (t-s)} Ce^{-\alpha s}\, ds =
		\\
		& = Ce^{-\sigma t} \int_{0}^{t}  e^{(\sigma-\alpha) s}\, ds = \frac{Ce^{-\sigma t}}{\sigma-\alpha} \left.e^{(\sigma-\alpha) s}
		\right|_0^t =                                                                                                                                                                                         \\
		& = \frac{Ce^{-\sigma t}}{-(\sigma-\alpha)}\left(1 - e^{(\sigma - \alpha)t} \right) = \frac{C}{-(\sigma-\alpha)}\left(e^{-\sigma t} - e^{-
			\alpha t} \right) \leq                                                                                                                                                                                \\
		& \leq \widehat{C}e^{-\min(\alpha, \sigma)t}.
	\end{align*}
	In the case $\alpha > \sigma$ we set $\widetilde{C}_{1} = \|\nu_0\| + \widehat{C}$ and in the case $\alpha < \sigma$: $\widetilde{C}_{1} = \widehat{C}$.
	It remains to note that for $\alpha = \sigma$ the following equality holds
	\[
	Ce^{-\sigma t} \int_{0}^{t}  e^{(\sigma-\alpha) s}\, ds = Cte^{-\sigma t},
	\]
	so we can put $\widetilde{C}_{2} = \|\nu_0\| + C$, which completes the proof of Lemma~\ref{lemm1}.
\end{proof}

\begin{theorem} \label{thSubcritical1}
	Let ${\beta^* > \beta_c}$ and $\lambda_{\mathcal{E}} < 0$. Then for ${t \rightarrow \infty}$ and all ${n \in \mathbb{N}}$ the following statements hold
	\[
	m_n(t,x,y) \sim D_n(x,y)e^{\lambda_{\mathcal{E}}t}, \qquad m_n(t,x) \sim D_n(x) e^{\lambda_{\mathcal{E}}t},
	\]
	where $D_n(x,y)$ and $D_n(x)$ are some constants.
	
\end{theorem}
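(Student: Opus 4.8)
The plan is to argue by induction on $n$, paralleling the structure of the proofs of Theorems~\ref{th8} and~\ref{thCritical} but exploiting the sign $\lambda_{\mathcal{E}}<0$. The base case $n=1$ is precisely part~$a)$ of Theorem~\ref{th_fm} (finite variance) or Theorem~\ref{th_fm_ht} (heavy tails), which give $m_1(t,x,y)\sim C(x,y)e^{\lambda_{\mathcal{E}}t}$ and $m_1(t,x)\sim C(x)e^{\lambda_{\mathcal{E}}t}$ and hold irrespective of the variance assumptions; this is why the present theorem is variance-free. The guiding observation I would isolate first is that, since $\beta^*>\beta_c$, the self-adjoint operator $\mathcal{E}=\mathcal{A}+\beta^*\Delta_0-b_0 I$ has $\lambda_{\mathcal{E}}=\lambda_0-b_0$ as an isolated top eigenvalue, with the rest of $\spec(\mathcal{E})$ contained in some $(-\infty,-\sigma']$ where $\sigma'>\sigma:=-\lambda_{\mathcal{E}}>0$. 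Because every monomial in $g_n$ is a product of at least two lower moments (the outer sum runs over $r\ge 2$), the inductive hypothesis forces the driving term $\delta_0(\cdot)g_n(m_1,\dots,m_{n-1})$ to decay like $e^{2\lambda_{\mathcal{E}}t}$, i.e.\ strictly faster than the target rate $e^{\lambda_{\mathcal{E}}t}$. This is the crux: in contrast with the supercritical regime of Theorem~\ref{th8}, here the nonlinearity is subdominant and the leading behaviour of every moment is inherited from the first moment.

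For $m_n(t,x,y)$ I would reuse the splitting $l^2(\mathbb{Z}^d)=E_f\oplus E_f^{\perp}$ into the eigenspace $E_f$ of $\lambda_{\mathcal{E}}$ (with eigenfunction $f$) and its $\mathcal{E}$-invariant orthogonal complement. Projecting equation~\eqref{eq2} onto $E_f$ and writing $h_n(t,y):=\langle f, m_n(t,\cdot,y)\rangle$ gives the scalar equation $\partial_t h_n=\lambda_{\mathcal{E}}h_n+f(0)g_n(m_1(t,0,y),\dots,m_{n-1}(t,0,y))$ with $h_n(0,y)=f(y)$. Solving by variation of constants and factoring out $e^{\lambda_{\mathcal{E}}t}$, the remaining integral $\int_0^t e^{-\lambda_{\mathcal{E}}s}f(0)g_n(\dots)\,ds$ converges as $t\to\infty$ precisely because its integrand decays like $e^{\lambda_{\mathcal{E}}s}$, whence $h_n(t,y)\sim \widetilde{B}_n(y)e^{\lambda_{\mathcal{E}}t}$ for an explicit constant $\widetilde{B}_n(y)$. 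On $E_f^{\perp}$ the spectrum lies in $(-\infty,-\sigma']$ and the projected forcing still decays like $e^{-2\sigma t}$, so Lemma~\ref{lemm1} (with $\alpha=2\sigma$ and gap $\sigma'$) bounds the orthogonal component by a multiple of $e^{-\min(2\sigma,\sigma')t}$, possibly with an extra factor $t$ in the borderline case $2\sigma=\sigma'$; in every case this is $o(e^{\lambda_{\mathcal{E}}t})$, since both $2\sigma>\sigma$ and $\sigma'>\sigma$. Adding the two contributions yields $m_n(t,x,y)\sim \frac{f(x)\widetilde{B}_n(y)}{\langle f,f\rangle}e^{\lambda_{\mathcal{E}}t}$, which defines $D_n(x,y)$ and closes the induction for the local moment.

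For the total moment $m_n(t,x)$, which lives in $l^\infty$ rather than $l^2$, I would avoid the spectral projection and instead feed the just-obtained local asymptotics into the integral equation~\eqref{eq6}, as is done in the critical case. Substituting $m_1(t-s,x,0)\sim C(x,0)e^{\lambda_{\mathcal{E}}(t-s)}$ and, by the inductive hypothesis, $g_k(m_1(s,0),\dots,m_{k-1}(s,0))\sim K_k\,e^{2\lambda_{\mathcal{E}}s}$, the convolution factorizes as $e^{\lambda_{\mathcal{E}}t}\int_0^t \bigl[m_1(t-s,x,0)e^{-\lambda_{\mathcal{E}}(t-s)}\bigr]\bigl[g_k(m_1(s,0),\dots)\,e^{-\lambda_{\mathcal{E}}s}\bigr]\,ds$; the first bracket tends to $C(x,0)$ while the second is integrable in $s$, so a dominated-convergence argument shows the integral term is asymptotic to a constant multiple of $e^{\lambda_{\mathcal{E}}t}$. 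Since $m_1(t,x)\sim C(x)e^{\lambda_{\mathcal{E}}t}$ as well, both summands in~\eqref{eq6} are of order $e^{\lambda_{\mathcal{E}}t}$ and I obtain $m_n(t,x)\sim D_n(x)e^{\lambda_{\mathcal{E}}t}$.

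The main technical obstacle I anticipate is making the convolution estimates rigorous and uniform: one must control the integrand near $s=t$, where $m_1(t-s,x,0)e^{-\lambda_{\mathcal{E}}(t-s)}$ has not yet entered its asymptotic regime, and confirm that this tail contributes negligibly thanks to the integrable $e^{\lambda_{\mathcal{E}}s}$ factor; the same care justifies interchanging limit and integral in the $E_f$-projection. The remaining points—self-adjointness of $\mathcal{E}$, isolation of $\lambda_{\mathcal{E}}$, and nonnegativity of the $\beta^{(r)}=f^{(r)}(1)$ ensuring the constants $D_n$ are genuinely nonzero so that the relation $\sim$ is meaningful—are routine. Everything rests only on the spectral gap and the sign $\lambda_{\mathcal{E}}<0$, so, as with the base case, no hypothesis on the variance of the jumps enters.
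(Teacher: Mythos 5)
Your proposal is correct and follows essentially the same route as the paper's own proof: the same decomposition $l^2(\mathbb{Z}^d)=E_{f}\oplus E_{f}^{\perp}$, variation of constants for the projection $\langle f, m_n\rangle$ on $E_{f}$, Lemma~\ref{lemm1} on $E_{f}^{\perp}$ with the forcing term decaying like $e^{2\lambda_{\mathcal{E}}t}$ because every monomial in $g_n$ involves $r\ge 2$ factors, and the integral equations of Theorem~\ref{th_int} to transfer the asymptotics to $m_n(t,x)$. The only differences are presentational: you make explicit the induction on $n$, the positivity of the limiting constants via $\beta^{(r)}\ge 0$, and the verification that the spectral gap on $E_{f}^{\perp}$ exceeds $-\lambda_{\mathcal{E}}$ — points the paper leaves implicit.
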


\begin{proof}
	The proof will be carried out for $m_n(t,x,y)$. The limit relations for $m_n(t,x)$ follow from the form of the integral equations \ref{th_int} and the asymptotics for $m_n(t,x,y)$.
	
	As in the proof of Theorem~\ref{thCritical} we consider the eigenfunction ${f(x) \in l^2(\mathbb{Z}^d)}$ with the eigenvalue $\lambda_{\mathcal{E}}$ of the operator $\mathcal{E}$ and denote by $E_{f}^{\perp}$  the subspace in $l^2(\mathbb{Z}^d)$, which orthogonal to the element $f$ (see the corresponding definition in the proof of Theorem~\ref{thCritical}).
	
	Multiplying the equation for $m_2(t,x,y)$ scalarly by $f$, we get
	\[
	{\partial_t \langle f, m_2(t,x,y) \rangle} =  \lambda_{\mathcal{E}}\langle f, m_2(t,x,y) \rangle + {f(0)g_2(m_1(t,0,y))}.
	\]
	Let ${h(t,y) := \langle f, m_2(t,x,y) \rangle}$, this function satisfies the equation
	\[
	\partial_t h(t,y) = \lambda_{\mathcal{E}}h(t,y) + f(0)g_2(m_1(t,0,y))
	\]
	with the initial condition ${h(0,y) = \langle f, m_2(0,x,y) \rangle = \langle f, \delta_0(x-y) \rangle = f(y)}$, whose solution has the form
	\[
	h(t,y) = e^{\lambda_{\mathcal{E}}t}f(y) + \int_{0}^{t} e^{\lambda_{\mathcal{E}}(t-s)} f(0)g_2(m_1(s,0,y))\,ds.
	\]
	
	Since the relation ${m_1(t,0,y) \sim C(0,y)e^{\lambda_{\mathcal{E}}t}}$ holds for $m_1(t,0,y)$, and this and the explicit form of the function $g_2(m_1)$ implies the relation $g_2(m_1(t,0,y)) \sim \widetilde{K} e^{2\lambda_{\mathcal{E}}t}$, where $\widetilde{K}$ is some constant, then $h(t,y)$ satisfies the limit relation
	\[
	h(t,y) \sim K_1(y) e^{\lambda_{\mathcal{E}}t} + K_2 e^{2\lambda_{\mathcal{E}}t},
	\]
	where $K_1(y), K_2$ are constant.
	
	Consider now the subspace $E_{f}^{\perp}$. The function $m_2(t,x,y)$ satisfies the equation
	\[
	\partial_t m_2(t,x,y) = \mathcal{E}m_2(t, x, y) + \delta_0(x)g_2(m_1(t,x,y))
	\]
	and the spectrum of the operator $\mathcal{E}$ restricted to $E_{f}^{\perp}$ is included into ${(-\infty, -\sigma]}$, ${\sigma>0}$. Using Lemma~\ref{lemm1}, we obtain that on the subspace $E_{f}^{\perp}$ for ${-2\lambda_{\mathcal{E}} \neq \sigma}$ the following estimate holds
	\[
	{\|m_2(t,x,y)\| \leq \widetilde{C}_{1}e^{-\min(-2\lambda_{\mathcal{E}}, \sigma)t}}
	\]
	and ${\|m_2(t,x,y)\| \leq \widetilde{C}_{2} t e^{2\lambda_{\mathcal{E}} t}}$ otherwise, with some constants ${\widetilde{C}_{1}, \widetilde{C}_{2}}$.
	
	As in the proof of Theorem~\ref{thCritical}, taking into account the representation $l^2(\mathbb{Z}^d) = E_{f}\oplus E_{f}^{\perp}$, we obtain for $m_2(t,x,y)$ as ${t \rightarrow \infty}$ the relation
	\[
	m_2(t,x,y) \sim D_2(x,y)e^{\lambda_{\mathcal{E}}t}.
	\]
	It remains to note that for all ${n \geq 2}$ and ${t \rightarrow \infty}$ the relation $g_n(m_1, \dots, m_{n-1}) \sim \widetilde{K}_{n} e^{2\lambda_{\mathcal{E}}t}$ holds, where $\widetilde{K}_{n}$ is some constant. This follows from the explicit form of the function ${g_n(m_1, \dots, m_{n-1})}$. And the above reasoning remains true for $m_n(t,x,y)$ for all ${n \in \mathbb{N}}$.
	
	So, for $m_n(t,x,y)$ for all ${n \in \mathbb{N}}$ and for ${t \rightarrow \infty}$ we have
	\[
	m_n(t,x,y) \sim D_n(x,y)e^{\lambda_{\mathcal{E}}t}.
	\]
	The theorem is proved. \end{proof}

Note that in proving Theorems~\ref{thCritical} and \ref{thSubcritical1} in addition to the asymptotic behavior of the first moments, which for $\beta^* > \beta_c$ does not depend on the variance of jumps of the random walk, we also use differential equations for higher moments, which, as noted above, also do not depend on the conditions imposed on the variance of jumps. Consequently, all the results obtained for the case ${\beta^* > \beta_c}$ do not depend on the variance of jumps of the random walk.

To study the asymptotic behavior of the particle number moments in the case $\beta^* \leq \beta_c$, when there is no isolated eigenvalue $\lambda_{\mathcal{E}}$, we need the following auxiliary lemma.

\begin{lemma} \label{lemm2}
	Let continuous functions ${\varphi(t), \chi(t) \geq 0}$, ${t \geq 0}$, satisfy the following asymptotic relations as ${t \rightarrow \infty}$
	\[
	\varphi(t) \sim \varphi_0 t^{\alpha} (\ln t)^{\beta} e^{-b_0 t}, \qquad \chi(t) \sim \chi_0 t^{2 \alpha} (\ln t)^{2 \beta} e^{-2 b_0 t},
	\]
	where $\alpha, \beta \in \mathbb{R}$, $b_0 \in \mathbb{R}_+$ and let $W(t) := \int_{0}^{t} \varphi(t-s)\chi(s)\,ds$.
	Then for $W(t)$ the following asymptotic relation holds as ${t \rightarrow \infty}$
	\[
	W(t) \sim W_0 t^{\alpha} (\ln t)^{\beta} e^{-b_0 t}.
	\]
\end{lemma}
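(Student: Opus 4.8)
The plan is to treat $W(t)$ as a Laplace-type convolution and to exploit the fact that $\chi$ decays at twice the exponential rate of $\varphi$. This disparity forces the dominant contribution to come from the region where $s$ stays bounded while $t-s$ is large: there $\chi(s)e^{b_0 s}\sim\chi_0 s^{2\alpha}(\ln s)^{2\beta}e^{-b_0 s}$ is still exponentially decaying, hence integrable, whereas $\varphi(t-s)\sim\varphi_0 t^{\alpha}(\ln t)^{\beta}e^{-b_0 t}e^{b_0 s}$. I therefore expect the constant to be
\[
W_0=\varphi_0\int_{0}^{\infty}\chi(s)e^{b_0 s}\,ds,
\]
and I would begin by recording that this integral converges (continuity of $\chi$ near $0$ and the exponential decay just mentioned at infinity) and is strictly positive, so that the claimed $\sim$ is meaningful.

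First I would split $W(t)=I_1(t)+I_2(t)$ with $I_1(t)=\int_{0}^{t/2}\varphi(t-s)\chi(s)\,ds$ and $I_2(t)=\int_{t/2}^{t}\varphi(t-s)\chi(s)\,ds$, the cut-off $t/2$ chosen so that in each piece exactly one factor has a large argument. For $I_1$ I would factor out $t^{\alpha}(\ln t)^{\beta}e^{-b_0 t}$ and analyze $\int_{0}^{t/2}g_t(s)\,ds$, where $g_t(s)=\varphi(t-s)\,e^{b_0 t}\,t^{-\alpha}(\ln t)^{-\beta}\chi(s)$. Since $t-s\ge t/2\to\infty$ on this range, the asymptotics of $\varphi$ give the pointwise limit $g_t(s)\to\varphi_0 e^{b_0 s}\chi(s)$, whose integral over $[0,\infty)$ equals $W_0$; I would then pass to the limit by dominated convergence.

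The hard part will be producing a single $t$-independent integrable majorant for $g_t$. This hinges on the uniform bound, valid for $s\in[0,t/2]$ and all large $t$,
\[
\frac{(t-s)^{\alpha}(\ln(t-s))^{\beta}}{t^{\alpha}(\ln t)^{\beta}}\le M,
\]
which I would obtain by setting $s=\theta t$ with $\theta\in[0,1/2]$, so that $(t-s)/t=1-\theta\in[1/2,1]$ and $\ln(t-s)/\ln t=1+\ln(1-\theta)/\ln t$ stays in a fixed compact subinterval of $(0,\infty)$. Combined with the one-sided asymptotic estimate $\varphi(t-s)\le 2\varphi_0(t-s)^{\alpha}(\ln(t-s))^{\beta}e^{-b_0(t-s)}$, this yields $g_t(s)\le 2\varphi_0 M\,\chi(s)e^{b_0 s}$, an integrable bound independent of $t$; dominated convergence then gives $I_1(t)\sim W_0\,t^{\alpha}(\ln t)^{\beta}e^{-b_0 t}$.

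Finally, for $I_2$ I would substitute $u=t-s$ to write $I_2(t)=\int_{0}^{t/2}\varphi(u)\chi(t-u)\,du$, now using the asymptotics of $\chi(t-u)$ (the large-argument factor) in the crude form $\chi(t-u)\le C\,t^{2\alpha}(\ln t)^{2\beta}e^{-2b_0 t}e^{2b_0 u}$ together with boundedness of $\varphi$. The remaining integral $\int_{0}^{t/2}\varphi(u)e^{2b_0 u}\,du$ has an exponentially growing integrand and is controlled by its upper endpoint, contributing $O(t^{\alpha}(\ln t)^{\beta}e^{b_0 t/2})$. This produces
\[
I_2(t)=O\!\left(t^{3\alpha}(\ln t)^{3\beta}e^{-3b_0 t/2}\right)=o\!\left(t^{\alpha}(\ln t)^{\beta}e^{-b_0 t}\right),
\]
so $I_2$ is negligible and $W(t)=I_1(t)+I_2(t)\sim W_0\,t^{\alpha}(\ln t)^{\beta}e^{-b_0 t}$, as required.
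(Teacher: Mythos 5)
Your proof is correct and follows essentially the same route as the paper's: split the convolution so that the $\varphi$-factor has a large argument on the main piece, factor out $t^{\alpha}(\ln t)^{\beta}e^{-b_0 t}$, pass to the limit against the integrable density $e^{b_0 s}\chi(s)$, and show the remaining piece is exponentially negligible --- the paper cuts at $t-\delta$ with an $\varepsilon$-sandwich where you cut at $t/2$ with dominated convergence, a purely technical difference (and your explicit majorant is in fact the cleaner justification). As a minor bonus, your constant $W_0=\varphi_0\int_0^{\infty} e^{b_0 s}\chi(s)\,ds$ correctly carries the factor $\varphi_0$, which the paper's sandwich bounds silently normalize to $1$.
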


\begin{proof}
	It follows from the form of the asymptotics for the functions $\varphi(t)$ and $\chi(t)$, that for any ${\varepsilon > 0}$ there exists ${\delta > 0}$ such that the following relations hold for ${t \geq \delta}$
	\begin{gather*}
		(1- \varepsilon)t^{\alpha} (\ln t)^{\beta} e^{-b_0 t} \leq \varphi(t) \leq (1 + \varepsilon)t^{\alpha} (\ln t)^{\beta} e^{-b_0 t}, \\ (1- \varepsilon)t^{2 \alpha} (\ln t)^{2 \beta} e^{-2 b_0 t} \leq \chi(t) \leq (1 + \varepsilon)t^{2 \alpha} (\ln t)^{2 \beta} e^{-2 b_0 t}.
	\end{gather*}
	
	We choose ${t \geq 2 \delta}$ and represent the function $W(t)$ as a sum
	\[
	{W(t) = W_{1, \delta}(t) + W_{2, \delta}(t),}
	\]
	where
	\[
	W_{1, \delta}(t) = \int_{0}^{t - \delta} \varphi(t-s)\chi(s)\,ds, \quad W_{2, \delta}(t) = \int_{t - \delta}^{t} \varphi(t-s)\chi(s)\,ds.
	\]
	
	To estimate $W_{1, \delta}(t)$, note that for ${0 \leq s \leq t-\delta}$ the inequality ${t-s \geq \delta}$ holds. Hence we get that
	\begin{multline*}
		\int_{0}^{t - \delta} (1 - \varepsilon)(t-s)^{\alpha} (\ln
		(t-s))^{\beta} e^{-b_0 (t-s)} \chi(s)\,ds \leq W_{1, \delta}(t)\leq\\
		\leq \int_{0}^{t - \delta} (1 + \varepsilon)(t-s)^{\alpha} (\ln (t-s))^{\beta} e^{-b_0 (t-s)} \chi(s)\,ds.
	\end{multline*}
	
	Notice, that
	\begin{multline*}
		\int_{0}^{t - \delta} (t-s)^{\alpha} (\ln (t-s))^{\beta} e^{-b_0
			(t-s)} \chi(s)\,ds = \\
		= e^{-b_0 t} t^{\alpha} (\ln t)^{\beta} \int_{0}^{t - \delta} (1-s/t)^{\alpha} \left(\frac{\ln t + \ln(1 - s/t)}{\ln t}\right)^{\beta} e^{b_0 s} \chi(s)\,ds,
	\end{multline*}
	in this case the functions ${(1-s/t)^{\alpha}}$ and ${\left(\frac{\ln t + \ln(1 - s/t)}{\ln t}\right)^{\beta}}$ tend monotonically to 1 as ${t \rightarrow \infty}$ and ${\vphantom{\int_a^b} e^{b_0 s} \chi(s) \sim \chi_0 s^{2 \alpha} (\ln s)^{2 \beta} e^{-b_0 s}}$ as ${s \rightarrow \infty}$, i.e. ${e^{b_0 s} \chi(s) \in L[0, +\infty).}$
	
	So, we get
	\[
	\int_{0}^{t - \delta} (t-s)^{\alpha} (\ln (t-s))^{\beta} e^{-b_0 (t-s)} \chi(s)\,ds = e^{-b_0 t} t^{\alpha} (\ln t)^{\beta} \left( \int_{0}^{+\infty} e^{b_0 s} \chi(s)\,ds + o(1) \right).
	\]
	
	Consider now $W_{2, \delta}(t)$. Since ${t \geq 2 \delta}$, we have
	\begin{align*}
		W_{2, \delta}(t) & = \int_{t - \delta}^{t} \varphi(t-s)\chi(s)\,ds
		\leq                                                                                         \\
		& \leq (1 + \varepsilon) (t - \delta)^{2 \alpha} (\ln (t-\delta))^{2 \beta}
		e^{-2 b_0 (t - \delta)} \int_{0}^{\delta} \varphi(s)\,ds =                                   \\
		& = e^{-b_0 t} t^{\alpha} (\ln t)^{\beta} o(1).
	\end{align*}
	
	Finally, denoting ${W_0 := \int_{0}^{+\infty} e^{b_0 s} \chi(s)\,ds}$, we obtain the required asymptotic relation and Lemma~\ref{lemm2} is proved.
\end{proof}

\begin{theorem} \label{th_subcritical}
	Let the variance of jumps of the random walk be finite, then for ${t \rightarrow \infty}$ and all ${n \in \mathbb{N}}$ the following statements hold
	
	\begin{itemize}
		\item[$a)$] for $\beta^* = \beta_c$:
		
		\begin{itemize}
			\item[] $d=3$: $m_n(t,x,y) \sim A_n(x,y) t^{-1/2} e^{-b_0 t}$, $m_n(t,x) \sim A_n(x) t^{1/2} e^{-b_0 t}$,
			\item[] $d=4$: $m_n(t,x,y) \sim B_n(x,y) (\ln t)^{-1} e^{-b_0 t}$, $m_n(t,x) \sim B_n(x) t (\ln t)^{-1} e^{-b_0 t}$,
			\item[] $d \geq 5$: $m_n(t,x,y) \sim C_n(x,y) e^{-b_0 t}$, $m_n(t,x) \sim C_n(x) t e^{-b_0 t}$,
		\end{itemize}

		\item[$b)$] for $\beta^* < \beta_c$:
		\begin{itemize}
			\item[] $d \geq 3$:  $m_n(t,x,y) \sim D_n(x,y) t^{-d/2} e^{-b_0 t}$, $m_n(t,x) \sim D_n(x) e^{-b_0 t}$,
		\end{itemize}
	\end{itemize}
	where $A_n(x,y)$, $A_n(x)$, $B_n(x,y)$, $B_n(x)$, $C_n(x,y)$, $C_n(x)$, $D_n(x,y)$ and $D_n(x)$ are some constants.
	
\end{theorem}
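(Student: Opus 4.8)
The plan is to induct on the moment order $n$, handling the local moments $m_n(t,x,y)$ and the total moments $m_n(t,x)$ in parallel, using the integral equations~\eqref{eq6} together with the convolution Lemma~\ref{lemm2}. The base case $n=1$ is exactly Theorem~\ref{th_fm}, which gives $m_1(t,x,y)\sim C(x,y)u^*(t)$ and $m_1(t,x)\sim C(x)v^*(t)$ with $u^*,v^*$ as listed there; writing $u^*(t)=t^{\alpha_\ell}(\ln t)^{\beta_\ell}e^{-b_0 t}$ and $v^*(t)=t^{\alpha_t}(\ln t)^{\beta_t}e^{-b_0 t}$, I record that in every case $\alpha_t>\alpha_\ell$, i.e.\ $v^*$ exceeds $u^*$ by a strictly positive power of $t$. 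As noted after Theorem~\ref{th_fm}, for $d\le 2$ the relation $\beta^*>\beta_c$ always holds, so the hypotheses $\beta^*=\beta_c$ and $\beta^*<\beta_c$ force $d\ge 3$, which is exactly why only the dimension ranges in the statement occur.

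For the inductive step on the local moments I would feed the hypothesis $m_i(t,0,y)\sim C_i(0,y)u^*(t)$, $1\le i\le k-1$, into the source term $g_k$. By the explicit form of $g_k$ in Theorem~\ref{difeq_m}, each summand is a product of $r\ge 2$ of the moments $m_1,\dots,m_{k-1}$ and hence carries a factor $e^{-r b_0 t}$; therefore the $r=2$ terms dominate and $g_k(m_1(t,0,y),\dots,m_{k-1}(t,0,y))\sim\chi_0\,u^*(t)^2=\chi_0\,t^{2\alpha_\ell}(\ln t)^{2\beta_\ell}e^{-2b_0 t}$, where $\chi_0$ collects the combinatorial constants and $\beta^{(2)}=f''(1)$. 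This is precisely the hypothesis of Lemma~\ref{lemm2} on the factor $\chi$, while the kernel $m_1(t-s,x,0)\sim C(x,0)u^*(t-s)$ plays the role of $\varphi$. Lemma~\ref{lemm2} then gives $\int_0^t m_1(t-s,x,0)g_k(\dots)\,ds\sim(\text{const})\,u^*(t)$, which is of the same order as the free term $m_1(t,x,y)$ in~\eqref{eq6}; adding the two yields $m_k(t,x,y)\sim A_k(x,y)u^*(t)$ with the appropriate constant for the case at hand.

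For the total moments the same equation~\eqref{eq6} is used, but the matching that makes Lemma~\ref{lemm2} applicable now fails: the kernel $m_1(t-s,x,0)$ is still a \emph{local} first moment of order $u^*$, whereas the source $g_k(m_1(s,0),\dots)$ is built from the \emph{total} moments $m_i(s,0)\sim C_i(0)v^*(s)$ and so behaves like $(\text{const})\,v^*(s)^2=(\text{const})\,s^{2\alpha_t}(\ln s)^{2\beta_t}e^{-2b_0 s}$, whose polynomial order $2\alpha_t$ differs from $2\alpha_\ell$. Instead I would estimate the convolution directly: writing $e^{-b_0(t-s)}=e^{-b_0 t}e^{b_0 s}$ gives $m_1(t-s,x,0)\approx C(x,0)u^*(t)e^{b_0 s}$ on the range where the source is non-negligible, and $e^{b_0 s}g_k(m_1(s,0),\dots)\sim(\text{const})\,s^{2\alpha_t}(\ln s)^{2\beta_t}e^{-b_0 s}$ is integrable on $[0,\infty)$; hence the integral term is only of order $u^*(t)$. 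Since $v^*$ exceeds $u^*$ by a positive power of $t$ in each listed case, this term is negligible against the free term $m_1(t,x)\sim C(x)v^*(t)$, so $m_k(t,x)\sim m_1(t,x)\sim C(x)v^*(t)$, the asserted asymptotic.

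I expect the main obstacle to be this last step for the total moments, where Lemma~\ref{lemm2} cannot be invoked verbatim and the convolution must be controlled by hand. The delicate points are (i) justifying the replacement $u^*(t-s)\approx u^*(t)e^{b_0 s}$ uniformly enough, via the same $W_{1,\delta}/W_{2,\delta}$ splitting used in the proof of Lemma~\ref{lemm2}, and (ii) handling the range $s$ near $t$, where the large-time asymptotic of the kernel $m_1(t-s,x,0)$ breaks down but the source $g_k$ is exponentially small and hence contributes negligibly. The remaining checks are routine: that the $r\ge 3$ contributions to $g_k$ are genuinely lower order (they carry $e^{-r b_0 t}$ with $r>2$), and that the constants produced at each stage are finite and propagate through the induction.
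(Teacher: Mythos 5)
Your proposal is correct and follows the paper's proof essentially step for step: induction on $n$ through the integral equations \eqref{eq6}, the observation that the $r=2$ terms dominate in $g_k$ so that $g_k \sim \widetilde{G}_k\, t^{2\alpha}(\ln t)^{2\beta}e^{-2b_0 t}$, and Lemma~\ref{lemm2} to identify the order of the convolution, giving $m_n$ the order of the corresponding $m_1$ in every case. The one place you diverge is also a place where you are more careful than the paper: in the total-moment equation the kernel $m_1(t-s,x,0)$ is a \emph{local} first moment (order $u^*$) while the source $g_k(m_1(s,0),\dots)$ is built from \emph{total} moments (order $(v^*)^2$), so the hypotheses of Lemma~\ref{lemm2} as stated — which require the exponents of $\chi$ to be exactly twice those of $\varphi$ — are not literally met; the paper nonetheless invokes the lemma without comment, whereas you estimate this convolution by hand. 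Your hand estimate is in fact just the lemma's own proof rerun: the $W_{1,\delta}/W_{2,\delta}$ splitting uses only the integrability of $e^{b_0 s}\chi(s)$ and never the matching of exponents, so it yields $\int_0^t m_1(t-s,x,0)\,g_k(\cdot)\,ds \sim W_0\, u^*(t) = o\bigl(v^*(t)\bigr)$ (since $\alpha_t>\alpha_\ell$ in every listed case, as you verified), whence $m_n(t,x)\sim m_1(t,x)\sim C(x)v^*(t)$ — consistent with the theorem, which leaves the constants unspecified, and with the pleasant byproduct that the constants for the total moments do not change with $n$. Your remaining checks (nonnegativity of both terms so same-order asymptotics add without cancellation, $\beta^{(2)}=f''(1)>0$ under the standing assumption $\beta^*>0$, and the restriction to $d\ge 3$ because $\beta_c=0<\beta^*$ for $d\le 2$) are all sound.
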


\begin{proof}
	The limit relations for the first moments are obtained in Theorem~\ref{th_fm}. The second moments are expressed in terms of the first moments and their convolutions with the functions $g_2(m_1(t,0,y))$ and $g_2(m_1(t,0))$ using the integral equations~\ref{th_int}. Note that the asymptotic relations for the first moments for all $d$ in the case $\beta^* \leq \beta_c$ have the form $m_1 \sim \widetilde{C}_{1} t^{\alpha} (\ln t)^{\beta} e^{-b_0 t}$ and for the functions $g_2(m_1)$ the following asymptotic relations hold: $g_2(m_1) \sim \widetilde{G}_{2} t^{2 \alpha} (\ln t)^{2 \beta} e^{-2b_0 t}$, where $\widetilde{G}_{2}$ is some constant and $\alpha$ and $\beta$ are the same, as in the asymptotics of the corresponding first moment $m_1$. Using Lemma~\ref{lemm2} for the functions $m_1$ and $g_2$, we get that
	\[
	\int_{0}^{t}m_1(t-s)g_2(m_1(s))\,ds \sim W_0 t^{\alpha} (\ln t)^{\beta} e^{-b_0 t}, \quad t\to\infty.
	\]
	Finally, we obtain that for the second moments the relation $m_2 \sim \widetilde{C}_{2} t^{\alpha} (\ln t)^{\beta} e^{-b_0 t}$ holds, i.e. the second moments behave at infinity in the same way as the corresponding first moments, up to a constant. 	
	
	To complete the proof we note that for all ${n \geq 2}$ the following relation will hold
\[g_n(m_1, \dots, m_{n-1}) \sim \widetilde{G}_{n} t^{2 \alpha} (\ln t)^{2 \beta} e^{-2b_0 t}, \quad t\to\infty,\] where $\widetilde{G}_{n}$ is some constant. This means that for all ${n \in \mathbb{N}}$ and ${t \rightarrow \infty}$ the following limit relations will hold: $m_n \sim \widetilde{C}_{n} t^{\alpha} (\ln t)^{\beta} e^{-b_0 t}$. The theorem is proved.
\end{proof}

When the condition~\eqref{HT:def} is satisfied, which leads to an infinite variance of jumps, the following theorem turns out to be true.

\begin{theorem} \label{th_subcritical_ht}
	Under the condition~\eqref{HT:def} for ${t \rightarrow \infty}$ and all ${n \in \mathbb{N}}$ the following statements hold
	\begin{itemize}
		
		\item[$a)$] for ${\beta^* = \beta_c}$:
		\[
		m_n(t,x,y) \sim B_{n,d/\alpha}(x,y) u^*(t), \quad m_n(t,x) \sim B_{n,d/\alpha}(x)  v^*(t),
		\]
		where $B_{n,d/\alpha}(x,y)$ and $B_{n,d/\alpha}(x) > 0$ and
		
		\begin{itemize}
			\item[]  $u^*(t) = t^{d/\alpha - 2}e^{-b_0 t}$, $v^*(t) = t^{d/\alpha - 1}e^{-b_0 t}$, if $d/\alpha \in (1,2)$;
			\item[]  $u^*(t) = (\ln t)^{-1}e^{-b_0 t}$, $v^*(t) = t (\ln t)^{-1} e^{-b_0 t}$, if $d/\alpha = 2$;
			\item[]  $u^*(t) = e^{-b_0 t}$, $v^*(t) = t e^{-b_0 t}$, if $d/\alpha \in (2, +\infty)$;
		\end{itemize}
		
		\item[$b)$] for ${\beta^* < \beta_c}$:
		\[
		m_n(t,x,y) \sim A_{n}(x,y) u^*(t), \quad m_n(t,x) \sim A_{n}(x) v^*(t),
		\]
		where $A_{n}(x,y)$, $A_{n}(x) > 0$, $u^*(t) = t^{-d/\alpha}e^{-b_0 t}$, $v^*(t) = e^{-b_0 t}$.
		
	\end{itemize}
\end{theorem}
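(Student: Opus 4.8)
The plan is to reproduce, almost verbatim, the scheme of the proof of Theorem~\ref{th_subcritical}, the only change being that the explicit first-moment asymptotics are now supplied by Theorem~\ref{th_fm_ht} (the heavy-tailed case) instead of Theorem~\ref{th_fm}. The argument rests on three ingredients, each valid under condition~\eqref{HT:def}: the first-moment asymptotics of Theorem~\ref{th_fm_ht}, the integral equations~\eqref{eq6} for the higher moments, and the convolution Lemma~\ref{lemm2}. The key preliminary remark is that in every subcase of the statement the first moments share the universal shape $m_1(t,x,y)\sim C(x,y)\,u^*(t)$ and $m_1(t,x)\sim C(x)\,v^*(t)$, where $u^*(t)=t^{a}(\ln t)^{b}e^{-b_0 t}$ and $v^*(t)=t^{a'}(\ln t)^{b'}e^{-b_0 t}$ with the exponents read off from Theorem~\ref{th_fm_ht}. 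Since $b_0>0$ throughout, and a direct comparison of exponents gives $v^*(t)/u^*(t)\to\infty$ in each of the four subcases, these two facts will do all the work.

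First I would establish the local moments $m_n(t,x,y)$ by induction on $n$ using the first line of~\eqref{eq6}, the base case $n=1$ being Theorem~\ref{th_fm_ht}. Assuming $m_j(t,x,y)\sim B_j(x,y)\,u^*(t)$ for all $j<n$, the explicit polynomial form of $g_n$ shows that its slowest-decaying contribution comes from the products of exactly two lower moments, i.e. the terms with $r=2$, whose coefficient $\beta^{(2)}=f''(1)=\sum_{k\ge 2}k(k-1)b_k$ is strictly positive because $\beta^*>0$; consequently $g_n(m_1(s,0,y),\dots,m_{n-1}(s,0,y))\sim\widetilde G_n\,(u^*(s))^2$. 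Taking $\varphi(t)=m_1(t,x,0)\sim C(x,0)u^*(t)$ and $\chi(s)$ equal to this last quantity, the exponents of $\chi$ are exactly twice those of $\varphi$, so Lemma~\ref{lemm2} applies directly and yields a convolution of order $u^*(t)$. Adding the free term $m_1(t,x,y)\sim C(x,y)u^*(t)$, which is of the same order, gives $m_n(t,x,y)\sim B_{n,d/\alpha}(x,y)\,u^*(t)$ and closes the induction.

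For the total moments $m_n(t,x)$ I would run the same induction on the second line of~\eqref{eq6}, and this is where the one genuine subtlety appears. Here the kernel $m_1(t-s,x,0)\sim C(x,0)u^*(t-s)$ carries the local exponents, whereas the forcing $g_n(m_1(s,0),\dots)\sim\widetilde G_n(v^*(s))^2$ carries the doubled total exponents, so the hypothesis of Lemma~\ref{lemm2} that $\chi$ have exactly twice the exponents of $\varphi$ fails. The point to make is that the proof of Lemma~\ref{lemm2} uses only, on the one hand, the precise asymptotic shape of $\varphi$ and, on the other, that $e^{b_0 s}\chi(s)\in L[0,+\infty)$; the latter holds for any polynomial-times-logarithm prefactor thanks to the $e^{-2b_0 s}$ decay of $\chi$ with $b_0>0$. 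Hence the convolution is again of order $u^*(t)$, and since $v^*(t)/u^*(t)\to\infty$ it is negligible against the free term $m_1(t,x)\sim C(x)v^*(t)$; therefore $m_n(t,x)\sim B_{n,d/\alpha}(x)\,v^*(t)$ with the same leading constant as the first moment.

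Finally, I would observe that nothing in the argument is specific to a particular value of $d/\alpha$: the three prefactor regimes of part~$a)$ and the single regime of part~$b)$ are all subsumed under the universal form $t^{a}(\ln t)^{b}e^{-b_0 t}$, so one merely substitutes the corresponding exponents from Theorem~\ref{th_fm_ht}. In this way the heavy-tailed statement follows from the finite-variance argument of Theorem~\ref{th_subcritical} by swapping the input asymptotics. The main obstacle, as indicated above, is the total-number case: confirming that the convolution in~\eqref{eq6} stays subdominant despite the mismatch between the local kernel and the total forcing, which requires the slight strengthening of Lemma~\ref{lemm2} just described rather than its literal statement.
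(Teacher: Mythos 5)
Your proposal is correct and takes essentially the same route as the paper: the paper's proof of Theorem~\ref{th_subcritical_ht} consists precisely of substituting the first-moment asymptotics of Theorem~\ref{th_fm_ht}, which in every regime have the universal form $Ct^{a}(\ln t)^{b}e^{-b_0 t}$, into the inductive argument of Theorem~\ref{th_subcritical} built on the integral equations~\eqref{eq6} and the convolution Lemma~\ref{lemm2}. Your additional observation---that in the equation for $m_n(t,x)$ the kernel $m_1(t-s,x,0)$ carries the \emph{local} exponents while the forcing $g_n(m_1(s,0),\dots)$ carries the doubled \emph{total} exponents, so that one needs the easy strengthening of Lemma~\ref{lemm2} to any $\chi$ with $e^{b_0 s}\chi(s)\in L[0,+\infty)$ and $\chi(t)=o(u^*(t))$, after which the convolution is $O(u^*(t))=o(v^*(t))$ and the free term dominates---is a correct repair of a mismatch that the paper applies Lemma~\ref{lemm2} to without comment.
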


\begin{proof}
	Asymptotic relations for the first moments in the case of the condition~\eqref{HT:def} are obtained in Theorem~\ref{th_fm_ht}.
	Note that for all possible values of the parameter $d/\alpha$ for $\beta^* \leq \beta_c$ these relations have the form
	\[
	m_1 \sim C t^{\alpha} (\ln t)^{\beta} e^{-b_0 t},
	\]
	where $\alpha$ and $\beta$ are some known constants.
	
	Further, carrying out the arguments from the proof of the Theorem~\ref{th_subcritical} without changes, we obtain that all integer moments in the case under consideration behave at infinity in the same way as the corresponding first moments, up to a constant. The theorem is proved.
\end{proof}


\end{document}